\documentclass[11pt,reqno]{amsart} 
\setlength{\textwidth}{5in}
\setlength{\textheight}{7.7in}

\usepackage{amssymb}
\usepackage{amsmath}
\usepackage{amsfonts}
\usepackage[english]{babel}
\usepackage[T1]{fontenc}
\usepackage[latin1]{inputenc}
\usepackage{amsthm}
\usepackage[all]{xy}

\theoremstyle{plain}
\newtheorem{theorem}{Theorem}
\newtheorem{proposition}[theorem]{Proposition}
\newtheorem{corollary}[theorem]{Corollary}
\newtheorem{lemma}[theorem]{Lemma}

\theoremstyle{remark}

\newtheorem{example}[theorem]{Example}
\newtheorem{question}[theorem]{Question}

\newcommand{\e}{{\mathrm e}}

\newcommand{\UnitDisk}{\mathbb{D}}

\newcommand{\UnitCircle}{\mathbb{T}}

\title[Survey of composition operators]{Composition operators on vector-valued analytic function spaces: a survey}
\author{Jussi Laitila}
\address{Jussi Laitila, Department of Biosciences, P.O.\ Box 65, FI-00014 University of Helsinki, Helsinki, Finland}
\email{jussi.laitila@helsinki.fi}
\author{Hans-Olav Tylli}
\address{Hans-Olav Tylli, Department of Mathematics and Statistics, P.O.\ Box 68, FI-00014 University of Helsinki, Helsinki, Finland}
\email{hans-olav.tylli@helsinki.fi}
\subjclass[2010]{47B33, 46E15, 46E40, 47B07}
\keywords{Analytic functions, Banach space, compactness, composition operator, weak compactness, vector-valued function}

\begin{document}

\begin{abstract}
We survey recent results about composition operators induced by analytic self-maps 
of the unit disk in the complex plane  on various Banach spaces of
analytic functions taking values in infinite-dimensional Banach spaces. We
mostly concentrate on the research line into qualitative properties such as weak
compactness, initiated by Liu, Saksman and Tylli (1998), and continued in
several other papers. 
We discuss composition operators on
strong, respectively weak, spaces of vector-valued analytic functions, as
well as between weak and strong spaces. As concrete examples, 
we review more carefully and present some new
observations in the cases of vector-valued Hardy and $BMOA$ spaces,
though the study of composition operators 
has been extended to a wide range of spaces of vector-valued 
analytic functions, including spaces defined on other domains.
Several open problems are stated.
\end{abstract}

\date{20.3.2014}

\maketitle

\section{Introduction}

Let $\mathbb D = \{z \in \mathbb C\colon \vert z\vert < 1\}$ be the open unit disk in the complex plane
$\mathbb C$,
and let $\varphi\colon \mathbb D \to \mathbb D$ be a fixed analytic map. 
The classical study  of the analytic composition operators 
$C_\varphi$, where 
\[
f \mapsto C_\varphi(f) =  f \circ \varphi,
\] 
originates from the work of Ryff (1966) and Nordgren (1968). For instance, they observed that
any $C_\varphi$ defines a  bounded operator $H^p \to H^p$
as a consequence of the Littlewood subordination principle. 
Recall that for $1 \le p < \infty$, the analytic function $f\colon \mathbb D \to \mathbb C$
belongs to the Hardy space $H^p$ if 
 \[
 \Vert f\Vert^p_{H^p} = \sup_{0 \le r < 1} \int_{\mathbb  T}  \vert f(r\xi)\vert^p dm(\xi)
 < \infty
 \]
where $\mathbb T = \partial \mathbb D = [0,2\pi]$ and $dm(\e^{it}) = dt/2\pi$. 
The space $H^\infty$ consists of the bounded analytic functions.
Subsequently an extensive literature 
has emerged, where a very wide variety of 
properties  of analytic composition operators  has been addressed 
on a large number of  spaces of analytic functions. 
We refer to \cite{Sh93} and \cite{CMC95} for comprehensive accounts 
of the theory until ca.\ 1995.

This survey reviews more recent results about 
composition operators  on  various Banach spaces of vec\-tor-valued analytic functions
including the vec\-tor-valued Hardy space  $H^p(X)$, 
where $X$ is a complex Banach space. 
Let $f\colon \mathbb D \to X$ be a vector-valued analytic function and let $1\le p < \infty$. Then $f \in H^p(X)$, if 
 \[
 \Vert f\Vert^p_{H^p(X)} = \sup_{0 \le r < 1} \int_{\mathbb T}  \Vert f(r\xi)\Vert^p_X dm(\xi) < \infty. 
 \]
Moreover, $f\in H^\infty(X)$, if $\Vert f\Vert_{H^\infty}=\sup_{z\in\UnitDisk}\Vert f(z) \Vert_X <\infty$.
In this notation $H^p = H^p(\mathbb C)$.
 Above the analyticity of $f\colon \mathbb D \to X$ means that   the scalar-valued function $x^* \circ f$ 
 is analytic $\mathbb D \to \mathbb C$ for any functional $x^* \in X^*$  (that is, 
 $f$ is weakly analytic). 
This is equivalent to the requirement that the $X$-valued derivative $f'(z)$
exists for all points $z\in \UnitDisk$ (that is, $f$ is strongly analytic). For the basics of vector-valued
analytic functions, see for example \cite{HilleP}.

 Qualitative properties of the vector-valued composition operators 
 $f \mapsto f \circ \varphi$ on $H^p(X)$ and certain other spaces were first systematically 
studied by Liu, Saksman and Tylli \cite{LST98}.  Independently 
Hornor and Jamison \cite{HJ99}  considered the operators  $f \mapsto f \circ \varphi$  on 
$H^p(X)$ with different aims, and Sharma and Bhanu \cite{SB99} looked at some of their basic
operator properties on $H^2(X)$, where $X$ is a Hilbert space.

We mostly concentrate on qualitative properties, such as weak compactness,
of composition operators on several  Banach spaces of vector-valued 
analytic functions of both strong and weak type defined on $\UnitDisk$. Weak type spaces were 
introduced into this context by Bonet, Domanski and Lindstr\"om \cite{BDL01},
and in this case the techniques differ from those of the strong type spaces.  
In section \ref{frame} we introduce a general framework for vector-valued composition
operators in order to provide a convenient general perspective into the study,
and we review results that illustrate both similarities 
and differences compared to the scalar-valued case $X = \mathbb C$. We also highlight new phenomena that do not have any counterparts for scalar composition operators. For instance, composition operators can be studied between a weak and a strong space.
In the final section we briefly discuss attempts to generalize the larger class of
weighted composition operators to the vector-valued setting.
Some vector-valued arguments are sketched, but we mostly assume that the 
basic scalar theory is known from  \cite{Sh93} and \cite{CMC95}.
 
Composition operators of different nature occur in various other settings.
For instance, there is a well-developed theory of 
the  composition operators $S \mapsto A \circ S \circ B$, where $A$ and $B$ 
are fixed bounded operators, on spaces of linear operators, 
see e.g. the survey \cite{ST06}. 
Properties of  such composition operators 
will actually be required  in section \ref{weak} below. 
 
\section{a general framework}\label{frame}
 
We first introduce a flexible general framework for the study of 
qualitative properties of vector-valued composition operators, which
will facilitate a discussion of  some common features.

Suppose that $A$ is a Banach space of analytic functions $\mathbb{D} \to \mathbb{C}$
and let $A(X)$ be an associated vector-valued  Banach space of 
analytic functions $\mathbb{D} \to X$,  where $X$ is a complex Banach space.
Assume that the following properties hold for the pair $(A,A(X))$
for all Banach spaces $X$:

\begin{itemize}
\item[(AF1)] The constant maps $f(z) \equiv c$ belong to $A$ for all $c \in \mathbb C$.

\item[(AF2)]  $f \mapsto f \otimes x$ defines a bounded linear operator 
$J_x\colon A \to A(X)$ 
for  any $x \in X$, where $(f \otimes x)(z) = f(z)x$ for $z \in \mathbb{D}$.

\item[(AF3)]  $g \mapsto x^* \circ g$ defines a bounded linear operator $Q_{x^*}\colon A(X) \to A$ for any $x^* \in X^*$.

\item[(AF4)] The point evaluations $\delta_z$,  where $\delta_z(f) = f(z)$
for $f \in A(X)$, are bounded $A(X) \to X$ for all  $z \in \mathbb D$. 

\end{itemize}

It follows from (AF1) and (AF2) that the vector-valued constant maps $z \mapsto f_x(z) \equiv  x$, 
that is $f_x = 1 \otimes x$,  belong to $A(X)$ for all $x \in X$. It is easy to check that
(AF1) -- (AF4) are satisfied for the pair $(H^p, H^p(X))$ for any $X$.
Note that  for Banach spaces of analytic functions defined on other 
domains, such as a half-plane or the plane 
$\mathbb C$,  condition (AF1) may not be relevant and the above framework cannot be 
applied in this form.

Suppose that $A$ and $B$ are Banach spaces of analytic functions 
$\mathbb{D} \to \mathbb{C}$ so that $(A,A(X))$ and $(B,B(X))$
satisfy  (AF1) -- (AF4),
where $A(X)$ and $B(X)$ are $X$-valued Banach spaces of analytic functions on
$\mathbb D$ associated with $A$, respectively $B$. 
Let $\varphi\colon \mathbb{D} \to \mathbb{D}$ be a given analytic self-map, and suppose that
the vector-valued composition operator $\widetilde{C_\varphi}$ is bounded $A(X) \to B(X)$,
where  $f \mapsto \widetilde{C_\varphi}(f) = f \circ \varphi$. In order to distinguish between
composition operators acting on different spaces we will in the sequel  use 
$C_\varphi\colon A \to B$ for the composition operator 
$f \mapsto f \circ \varphi$ in the scalar-valued setting, that is, in the case $X = \mathbb C$, and 
$\widetilde{C_\varphi}$ for its vector-valued version $A(X) \to B(X)$.

The following general formulation is partly motivated by \cite[Prop.\ 1]{BDL01}.

\begin{proposition}\label{GF}
The following factorizations hold.
\begin{itemize}
\item[(F1)] Let  $x \in X$, $x^* \in X^*$ be norm-$1$ vectors so that $\langle x^*,x\rangle = 1$.
Then
\[
\xymatrix{
A(X) \ar[r]^{\widetilde{C_\varphi}} & B(X) \ar[d]^{Q_{x^*}}\\
A \ar[u]^{J_x} \ar[r]^{C_\varphi} & B}
\]
commutes.

\item[(F2)]  Let $j(x) = f_x$ for $x \in X$, where $f_x(z) \equiv x$ for all $z \in \mathbb D$. Then
\[
\xymatrix{
A(X) \ar[r]^{\widetilde{C_\varphi}}  & B(X) \ar[d]^{\delta_0}\\
X \ar[u]^j \ar[r]^{I_X} & X}
\]
commutes, where $I_X$ is the identity operator on $X$.
\end{itemize}
\end{proposition}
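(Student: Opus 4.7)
The plan is to verify each diagram by a direct element-wise computation, using nothing beyond the definitions of the maps $J_x$, $Q_{x^*}$, $\delta_0$, $j$ provided by the framework (AF1)--(AF4) and the definition of the composition operator $f \mapsto f \circ \varphi$. The axioms (AF1)--(AF4) already guarantee that all intermediate objects lie in the correct spaces and that all four maps are bounded, so there is no separate boundedness argument to carry out; only the equality of the two paths around each square requires checking.

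For (F1), I would take an arbitrary $f \in A$ and chase it around the square in both directions. Going up first and then across, $J_x(f) = f \otimes x$ is the function $z \mapsto f(z)x$ in $A(X)$, so $\widetilde{C_\varphi}(J_x f)$ is $z \mapsto f(\varphi(z))x$. Applying $Q_{x^*}$ post-composes with $x^*$, yielding the scalar function $z \mapsto \langle x^*, f(\varphi(z))x\rangle = f(\varphi(z))\langle x^*,x\rangle$. The normalization $\langle x^*,x\rangle = 1$ collapses this to $(C_\varphi f)(z)$, which is exactly the result of going across first and then down. So the diagram commutes on all of $A$.

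For (F2), I would take an arbitrary $x \in X$ and note that $j(x) = f_x$ is by definition the constant function $z \equiv x$. Since $\widetilde{C_\varphi}(f_x)(z) = f_x(\varphi(z)) = x$ for every $z \in \UnitDisk$, the image $\widetilde{C_\varphi}(f_x)$ is again the constant function with value $x$, so $\delta_0 \widetilde{C_\varphi}(f_x) = x = I_X(x)$. This is the entire argument.

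The main ``obstacle'' here is really just bookkeeping: making sure the symbol $f \otimes x$, the tensor-style notation for $z \mapsto f(z)x$, and the identification of $f_x$ with $1\otimes x$ are applied consistently, and that the linearity of $x^*$ is used in the right spot so that the scalar $\langle x^*,x\rangle$ factors out before $C_\varphi$ is recognized. Aside from that, the proposition is a formal consequence of (AF1)--(AF4), which is precisely why it serves as a convenient organizing device for the transference arguments in the sections to come.
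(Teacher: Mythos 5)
Your argument is correct and is essentially the paper's own proof: the paper likewise verifies (F1) by computing $x^*(\widetilde{C_\varphi}(f\otimes x)) = x^*((f\circ\varphi)\otimes x) = C_\varphi(f)$ using $\langle x^*,x\rangle=1$, and (F2) by noting $\delta_0(\widetilde{C_\varphi}(f_x)) = \delta_0(f_x) = x$. Your write-up just spells out the same element-chase in more detail.
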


\begin{proof}
Note towards  (F1) that $x^*(\widetilde{C_\varphi}(f \otimes x)) = x^*((f \circ \varphi) \otimes x)
= C_\varphi(f)$ for $f \in A$,  while 
$\delta_0(\widetilde{C_\varphi}(f_x)) = \delta_0(f_x) = x$ for $x \in X$.
\end{proof}

The above factorizations place some inherent restrictions 
on possible qualitative properties of the vector-valued operators $\widetilde{C_\varphi}$.
 Roughly speaking,
part (3) below states that  $\widetilde{C_\varphi}\colon A(X) \to B(X)$
cannot have any qualitative properties inherited under composition of linear operators
 that are not shared by $C_\varphi\colon  A \to B$ and the identity operator $I_X\colon X \to X$. 
Thus  Banach space properties of $X$ also influence (qualitative) properties of 
$\widetilde{C_\varphi}$.

\begin{corollary}\label{qcor}
Let $X$ be a complex Banach space.
\begin{enumerate}
\item
If $\widetilde{C_\varphi}$ is bounded $A(X) \to B(X)$, then $C_\varphi$ is bounded
$A \to B$.
\item
If  $\widetilde{C_\varphi}\colon A(X) \to B(X)$ is compact, then $C_\varphi$ is compact
$A \to B$ and $X$ is finite-dimensional. In particular, if
$X$ is infinite-dimensional, then $\widetilde{C_\varphi}$ is 
never compact $A(X) \to B(X)$.
\item
Let $\mathcal I$ be an operator ideal in the sense of Pietsch \cite{Pi}. If
$\widetilde{C_\varphi}\colon A(X) \to B(X)$ belongs to $\mathcal I$, then $I_X$ as well as 
$C_\varphi\colon A \to B$ belong to $\mathcal I$.
\end{enumerate}
\end{corollary}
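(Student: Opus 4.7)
The plan is to read off all three statements directly from the factorizations recorded in Proposition \ref{GF}, exploiting the classical fact that the three classes involved---bounded, compact, and $\mathcal I$-operators for a Pietsch ideal $\mathcal I$---are all two-sided: each is stable under composition with arbitrary bounded linear maps on either side.

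For part (1), diagram (F1) gives $C_\varphi = Q_{x^*} \circ \widetilde{C_\varphi} \circ J_x$, and the outer factors are bounded by (AF2) and (AF3). Thus boundedness of $\widetilde{C_\varphi}$ passes to $C_\varphi$ with no further effort. For part (2), the same factorization transfers compactness from $\widetilde{C_\varphi}$ to $C_\varphi$. To obtain the finite-dimensionality of $X$ I would turn to diagram (F2), which yields $I_X = \delta_0 \circ \widetilde{C_\varphi} \circ j$; compactness of $\widetilde{C_\varphi}$ then forces $I_X$ to be compact, and the classical fact that $I_X$ is compact only when $\dim X <\infty$ finishes the job. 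Part (3) is identical in spirit: since a Pietsch operator ideal is two-sided, the factorizations (F1) and (F2) place $C_\varphi$ and $I_X$ in $\mathcal I$ as soon as $\widetilde{C_\varphi}$ lies there.

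There is no serious obstacle here: the corollary is essentially a formal consequence of the factorizations already at hand. The one detail meriting a brief check is that the map $j\colon X \to A(X)$, $x \mapsto f_x$, appearing in diagram (F2), is itself bounded (so that it is a legitimate right factor in the class arguments above). I would dispatch this by the closed graph theorem: $j$ is plainly linear, and if $x_n \to x$ in $X$ with $j(x_n) \to g$ in $A(X)$, then the continuity of point evaluations from (AF4) gives $g(z) = \lim \delta_z(j(x_n)) = \lim x_n = x$ for every $z \in \UnitDisk$, so $g = f_x = j(x)$ and $j$ has closed graph.
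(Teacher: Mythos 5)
Your argument is correct and is essentially the paper's own proof: the paper likewise deduces all three parts directly from the factorizations (F1) and (F2) of Proposition \ref{GF}, using that boundedness, compactness and membership in a Pietsch ideal survive two-sided composition with bounded operators. Your closed-graph verification that $j\colon x\mapsto f_x$ is bounded (via the point evaluations of (AF4)) is a detail the paper leaves implicit, and it is a worthwhile addition since $j$ must be bounded for (F2) to be usable in this way.
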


 In fact, by (F1) and (F2) the compactness of  $\widetilde{C_\varphi}\colon A(X) \to B(X)$ 
 implies that both  $C_\varphi\colon  A \to B$ and $I_X$ are compact, that is, 
$X$ is finite-dimensional. Part (3) is verified in a similar fashion. For a converse of (2), see Proposition \ref{comp}.
 
\section{Weak compactness on $H^1(X)$ and other vector-valued spaces}\label{strongs}

Let $\varphi\colon \mathbb D \to \mathbb D$ be any analytic map. It was observed independently
in \cite{LST98} and \cite{HJ99} that  $\widetilde{C_\varphi}$ is bounded on $H^p(X)$, 
while \cite{SB99}  contains the case $H^2(X)$, where $X$ is a Hilbert space.
Boundedness can be verified in the following manner by a small modification of an argument for scalar $H^p$ spaces.
Note first that $z \mapsto \Vert f(z)\Vert_X$ is a subharmonic map on $\mathbb D$
for any analytic function $f\colon \mathbb D \to  X$, since
\[
\Vert f(z)\Vert_X = \sup_{\Vert x^* \Vert \le 1} \vert \langle x^*,f(z) \rangle \vert, 
\quad z \in \mathbb D.
\]
 Consequently, if $\varphi(0) = 0$, then the 
Littlewood inequality \cite[Thm.\ 2.22]{CMC95}
yields that $\Vert \widetilde{C_\varphi}(f)\Vert_{H^p(X)}
\le  \Vert f\Vert^p_{H^p(X)}$ for $f \in H^p(X)$. 

For the general case let $\sigma_a\colon \mathbb D \to \mathbb D$ be the M\"obius transformation defined
by $\sigma_a(z) = \frac{a-z}{1-\overline{a}z}$ for $a \in \mathbb D$.
If $\varphi(0) \neq  0$ , let  $\psi = \sigma_{\varphi(0)} \circ \varphi$,
so that $\psi(0) = 0$ and $C_\psi$ is a contraction $H^p(X) \to H^p(X)$. 
Since $\sigma_{\varphi(0)}^{-1} = \sigma_{\varphi(0)}$, we get that 
$C_\varphi = C_\psi \circ C_{\sigma_{\varphi(0)}}$ is bounded on 
$H^p(X)$ once we have checked that
$C_{\phi}$ is bounded on $H^p(X)$ for any M\"obius transformation
$\phi$. This can be verified by the  change of variables 
$w = \phi(z)$ inside the integral
\[
\int_{\mathbb T}  \Vert f(r\phi(\xi))\Vert^p_X dm(\xi)
\]
defining $\Vert C_\phi(f(r\cdot))\Vert_{H^p(X)}$ for $0 < r < 1$ and letting $r \to 1$.

The minor point of difference between the above scalar- 
and vector-valued arguments for the Hardy spaces
relates to the potential absence of radial limits. 
In fact, it is well known that any $f \in H^p$  has a.e.\ radial limits 
$f(\xi) = \lim_{r \to 1^{-}} f(r\xi)$ on $\mathbb T$,
but this is not always true for functions in $H^p(X)$: 
the bounded analytic function $f\colon \mathbb D \to c_0$, where
\[
f(z) = (z^n), \quad z \in \mathbb D,
\]
 does not have radial limits anywhere on $\mathbb T$. 
In fact, the existence of a.e.\ radial limits for
any $f \in H^p(X)$, and any fixed $1 \le p \le \infty$, characterizes the analytic
Radon-Nikod\'ym property (ARNP) of the complex Banach space $X$.
The ARNP and the above result by Bukhvalov and Danilevich (1982)
is not needed here,  but  the reader may keep in mind that
e.g.\ every reflexive Banach space has the ARNP. See, e.g., \cite[p.\ 723]{LaT06} for references and a discussion of the ARNP.

Let $X$ be an infinite-dimensional Banach space. 
According to  Corollary \ref{qcor}.(2)  there are no  compact compositions 
$\widetilde{C_\varphi}\colon H^p(X) \to H^p(X)$.
This raises the general question of which are the relevant qualitative properties 
for composition operators on vector-valued spaces such as $H^p(X)$.
In \cite{LST98} the authors considered weak compactness, and related properties, 
for which there are satisfactory results.

Let $X$ and $Y$ be Banach spaces. Recall that the bounded linear operator
$U\colon X \to Y$ is \textit{weakly compact} if  there is a weakly convergent 
subsequence $(Ux_{n_{k}})$ for any bounded sequence $(x_n) \subset X$.
If $X$ and $Y$ are non-reflexive spaces, then weakly compact
$U\colon X \to Y$ are relatively small operators.

A  fundamental result of Shapiro \cite{Sh87} (see also \cite{Sh93} and \cite{CMC95})
says that for $1 \le p < \infty$ 
the composition operator $C_\varphi$ is compact $H^p \to H^p$ if and only if  
\begin{equation}\label{Sh}
\lim_{\vert w \vert\to 1} \frac{N(\varphi,w)}{\log(1/\vert w\vert)} = 0.
\end{equation}
Above $N(\varphi,w) = \sum_{z \in \varphi^{-1}(w)} \log(1/\vert z\vert)$, 
where $w \in \mathbb D \setminus \{\varphi(0)\}$,
is the Nevan\-lin\-na counting function of $\varphi$. 
Several other equivalent criteria for the compactness of $C_\varphi\colon H^p \to H^p$
are known in the literature, but (\ref{Sh}) suffices for our purposes.
Littlewood's inequality implies that
$N(\varphi,w) \le C \cdot \log(1/\vert w\vert)$ as $\vert w \vert \to 1$
for  some constant $C = C(\varphi)$ for any analytic map $\varphi\colon \mathbb{D} \to \mathbb{D}$,
see \cite[10.4]{Sh93}.
Shapiro's condition (\ref{Sh}) is interpreted as a little-oh condition 
describing the rate of decrease of the affinity of  $\varphi$ 
for the values $w$ as $\vert w \vert \to 1$. 

There is a precise connection between
the weak compactness of  $\widetilde{C_\varphi}$ on $H^1(X)$ and the
compactness of  $C_\varphi$ on  $H^1$.
Note that Corollary \ref{qcor}.(3) implies that  $X$
is reflexive, that is, $I_X$ is weakly compact,
whenever $\widetilde{C_\varphi}$ is  weakly compact on $H^p(X)$.
Hence only  $p = 1$ or $p = \infty$ are interesting for weak compactness, since $H^p(X)$ 
is itself reflexive if $1 < p < \infty$ and $X$ is reflexive, because $H^p(X)$
is then a closed subspace of the reflexive space $L^p({\mathbb T},X)$.
The vector-valued part of the following result comes from \cite{LST98}.
 
\begin{theorem}\label{H1}
Let $X$ be a complex reflexive Banach space, and $\varphi\colon \mathbb D \to \mathbb D$
be an analytic map. Then the following conditions are equivalent.
\begin{enumerate}
\item
$\widetilde{C_\varphi}\colon H^1(X) \to H^1(X)$ is weakly compact
\item
$C_\varphi\colon H^1 \to H^1$  is weakly compact
\item
$C_\varphi\colon H^1 \to H^1$  is compact
\item Shapiro's condition \eqref{Sh} holds.
\end{enumerate}
\end{theorem}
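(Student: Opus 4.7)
The plan is to close the cycle $(1)\Rightarrow (2)\Rightarrow (3)\Leftrightarrow (4)\Rightarrow (1)$. The scalar equivalence $(3)\Leftrightarrow (4)$ is Shapiro's theorem recorded in \eqref{Sh}, and $(3)\Rightarrow (2)$ is immediate since compactness implies weak compactness. For $(2)\Rightarrow (3)$ I would invoke the known scalar fact (essentially due to Sarason) that composition operators on $H^1$ are weakly compact precisely when they are compact. For $(1)\Rightarrow (2)$, pick norming vectors $x\in X$ and $x^*\in X^*$ with $\langle x^*,x\rangle = 1$ and apply (F1) of Proposition \ref{GF}: the factorization $C_\varphi = Q_{x^*}\circ\widetilde{C_\varphi}\circ J_x$, together with the ideal property of the class of weakly compact operators as recorded in Corollary \ref{qcor}.(3), transfers weak compactness from $\widetilde{C_\varphi}$ down to $C_\varphi$.

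The substance of the theorem lies in $(4)\Rightarrow (1)$. Since $X$ is reflexive it has the ARNP, so every $f\in H^1(X)$ has radial boundary values $f^*\in L^1(\UnitCircle,X)$ and one obtains an isometric embedding $H^1(X)\hookrightarrow L^1(\UnitCircle,X)$. By a Bukhvalov/Diestel--Ruess style characterization of relatively weakly compact subsets of $L^1(\UnitCircle,X)$, combined with reflexivity of $X$ (which trivializes the pointwise-weak clause), it suffices to prove that $\widetilde{C_\varphi}(B_{H^1(X)})$ is \emph{uniformly integrable} in $L^1(\UnitCircle,X)$. The reduction to scalars is the crux. For nonzero $f\in H^1(X)$ one has $\|f^*\|_X\geq|\langle x^*,f^*\rangle|$ for any $x^*\in X^*$ with $\langle x^*,f\rangle\not\equiv 0$, which combined with $\log t\leq t$ gives $\log\|f^*\|_X\in L^1(\UnitCircle)$. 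I then define the outer function $g_f\in H^1$ by the standard Poisson--integral formula applied to $\log\|f^*\|_X$, so that $|g_f^*|=\|f^*\|_X$ a.e.\ and $\|g_f\|_{H^1}=\|f\|_{H^1(X)}$. The function $\log\|f(z)\|_X = \sup_{\|x^*\|\leq 1}\log|\langle x^*,f(z)\rangle|$ is subharmonic on $\UnitDisk$ with integrable boundary values, hence dominated by its Poisson integral, which equals $\log|g_f(z)|$; exponentiating yields $\|f(z)\|_X\leq|g_f(z)|$ pointwise on $\UnitDisk$. Composing with $\varphi$ and passing to radial limits gives $\|(f\circ\varphi)^*\|_X\leq|(g_f\circ\varphi)^*|$ a.e.\ on $\UnitCircle$. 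The hypothesis that $C_\varphi$ is compact on $H^1$ makes $\{g\circ\varphi:\|g\|_{H^1}\leq 1\}$ norm-relatively-compact, hence uniformly integrable in $L^1(\UnitCircle)$; since $\|g_f\|_{H^1}=\|f\|_{H^1(X)}$, the scalar uniform integrability transfers through the above domination to $\{\|(f\circ\varphi)^*\|_X:\|f\|_{H^1(X)}\leq 1\}$, and this is exactly uniform integrability of $\widetilde{C_\varphi}(B_{H^1(X)})$ in $L^1(\UnitCircle,X)$. Weak closedness of $H^1(X)$ in $L^1(\UnitCircle,X)$ (vanishing of the negative Fourier coefficients is preserved under weak $L^1$ limits) finally places the weak limit points back inside $H^1(X)$.

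The main obstacle I expect is the subharmonic domination step $\|f(z)\|_X\leq|g_f(z)|$ in the vector-valued setting. It requires verifying (i) that $\log\|f\|_X$ is genuinely subharmonic on $\UnitDisk$, which does not follow formally from subharmonicity of $\|f\|_X$ but does follow from presenting it as the upper envelope of the scalar subharmonic functions $\log|\langle x^*,f\rangle|$ together with continuity of $f$; (ii) that the outer function $g_f$ is well defined, i.e.\ that $\log\|f^*\|_X\in L^1(\UnitCircle)$; and (iii) that radial limits interact suitably with composition by $\varphi$, so that the interior pointwise inequality descends to an a.e.\ boundary inequality after applying $\varphi$, which rests once more on ARNP and a Lindel\"of-type argument. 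A secondary delicate point is selecting the correct variant of the $L^1(\UnitCircle,X)$ weak compactness theorem so that reflexivity of $X$ truly eliminates the pointwise clauses in the general criterion.
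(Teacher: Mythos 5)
Your handling of $(1)\Rightarrow(2)\Rightarrow(3)\Leftrightarrow(4)$ coincides with the paper's (the factorization (F1), Sarason's theorem, Shapiro's theorem). For the substantive implication, however, you take a genuinely different route. The paper proves $(4)\Rightarrow(1)$ by working inside $\mathbb D$: it invokes Stanton's formula \eqref{Sta}, which expresses $\Vert f\circ\varphi\Vert_{H^1(X)}$ through the Nevanlinna counting function and the distributional Laplacian of $z\mapsto\Vert f(z)\Vert_X$, and then approximates $\widetilde{C_\varphi}$ in operator norm by the weakly compact operators $\widetilde{C_\varphi}V_{n_0}$ built from de la Vall\'ee-Poussin means, Shapiro's condition entering through the estimate $N(\varphi,w)\le\varepsilon\log(1/\vert w\vert)$ near the boundary. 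You instead prove $(3)\Rightarrow(1)$ by a boundary-value argument: embed $H^1(X)$ isometrically into $L^1(\mathbb T,X)$ via the ARNP, dominate $\Vert f(z)\Vert_X$ by the outer function $g_f$ with $\vert g_f^*\vert=\Vert f^*\Vert_X$ a.e., and transfer the uniform integrability of the norm-compact scalar image $C_\varphi(B_{H^1})$ to $\widetilde{C_\varphi}(B_{H^1(X)})$. This is correct, and the domination step you worry about is cleanest in the form $\log\vert\langle x^*,f(z)\rangle\vert\le P\bigl[\log\vert\langle x^*,f^*\rangle\vert\bigr](z)\le P\bigl[\log\Vert f^*\Vert_X\bigr](z)$ (Poisson integrals of the boundary data), followed by a supremum over $x^*\in B_{X^*}$; no separate subharmonicity or Lindel\"of argument is then needed, since the interior inequality $\Vert f(\varphi(z))\Vert_X\le\vert g_f(\varphi(z))\vert$ passes directly to a.e.\ radial limits. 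One point to tighten: reflexivity of $X$ does not literally ``trivialize'' the pointwise clause of the Diestel--Ruess--Schachermayer criterion, since your functions are not pointwise bounded; you should either quote the theorem (Diestel; Brooks--Dinculeanu) that for reflexive $X$ every bounded uniformly integrable subset of $L^1(\mu,X)$ is relatively weakly compact, or supply the standard truncation-plus-Grothendieck-lemma reduction to functions with values in a fixed ball. Comparing what each approach buys: yours is more elementary in that it avoids distributional Laplacians and operator-norm approximation, and it uses the scalar compactness as a black box; the paper's method yields in addition a quantitative estimate of the distance from $\widetilde{C_\varphi}$ to the weakly compact operators (compare the remark after Theorem \ref{Berg}) and adapts to weighted Bergman spaces, whereas your argument is tied to spaces admitting an isometric boundary-value representation and to the ARNP.
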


\begin{proof} 
The implication (1) $\Rightarrow$ (2) follows from Corollary \ref{qcor}.(3).
Sarason \cite{S92} proved that the weak compactness of
  $C_\varphi\colon H^1 \to H^1$  actually yields the compactness  of 
  $C_\varphi\colon H^1 \to H^1$, in other words that
  (2) $\Rightarrow$ (3). The equivalence of (3) and (4) is contained in 
  Shapiro's theorem. There remains to show that 
 $\widetilde{C_\varphi}$ is weakly compact
$H^1(X) \to H^1(X)$ whenever $\varphi$ satisfies Shapiro's condition.

We outline the proof  of the implication (4) $\Rightarrow$ (1).
The argument is based on a Littlewood-Paley type formula for 
$\Vert \widetilde{C_\varphi}(f)\Vert_{H^1(X)}$ derived from a formula of Stanton 
for continuous subharmonic maps. 
His formula \cite[Thm.\ 2]{St86} implies that
\begin{equation}\label{Sta}
\Vert f \circ \varphi\Vert_{H^1(X)} = \Vert f(0)\Vert_X + \frac{1}{2\pi}
 \int N(\varphi,w) d[\Delta(\Vert f\Vert_X)](w)
\end{equation}
for $ f \in H^1(X)$, where $d[\Delta(\Vert f\Vert_X)]$ denotes the distributional Laplacian
associated to the subharmonic map $z \mapsto \Vert f(z)\Vert_X$ on $\mathbb D$.

Define de la Vallee-Poussin operators $V_n$ for any $n \in \mathbb N$ by
\[
V_nf(z) = \sum_{k=0}^n \hat{f}_kz^k + \sum_{k=n+1}^{2n-1} \frac{2n-k}{n} \hat{f}_kz^k
\]
for analytic functions $f\colon \mathbb D \to X$ having the Fourier expansion
$f(z) = \sum_{k=0}^\infty \hat{f}_kz^k$. 
Then $(V_n)$ is a uniformly bounded sequence of operators on $H^1(X)$
and $V_n\colon H^1(X) \to H^1(X)$ are weakly compact for any $n$ if $X$ is a reflexive 
Banach space (in fact, $V_n$ factors through a finite direct sum of  copies 
of $X$). Moreover, given $\varepsilon > 0$ and 
$0 < r < 1$ there is $n_0 = n_0(\varepsilon,r) \in \mathbb N$ so that
\begin{equation}\label{dlVest}
\Vert f(z) - V_{n_0}f(z)\Vert_X \le \varepsilon \cdot \Vert f \Vert_{H^1(X)}
\end{equation}
holds for all $\vert z \vert \le r$ and $f \in H^1(X)$. 

If Shapiro's condition (\ref{Sh}) holds and $\varepsilon > 0$ is arbitrary, then there is 
$r \in (0,1)$ such that $N(\varphi,w) \le \varepsilon \cdot \log(1/\vert w\vert)$
for all $\vert w \vert > r$. Fix 
$n_0$ as in (\ref{dlVest}) corresponding to $\varepsilon$ and $r$. By applying
(\ref{Sta}) to $f - V_{n_{0}}f$ we get for $f \in H^1(X)$ that
\begin{align*}
\Vert \widetilde{C_\varphi}(f) & - \widetilde{C_\varphi}(V_{n_{0}}f)\Vert_{H^1(X)}  =
\frac{1}{2\pi} \int_{\{r < \vert z\vert < 1\}} N(\varphi,w) d[\Delta(\Vert f-V_{n_{0}}f\Vert_X)](w)\\
& + \frac{1}{2\pi} \int_{\{\vert z\vert \le r\}} N(\varphi,w) d[\Delta(\Vert f-V_{n_{0}}f\Vert_X)](w) \equiv I_1+ I_2.
\end{align*}
The choice of $r \in (0,1)$ and (\ref{Sta}) applied to $\psi(z) = z$ give 
\begin{align*}
I_1 & \le \frac{\varepsilon}{2\pi} \int_{\mathbb D} \log(1/\vert w\vert) d[\Delta(\Vert f-V_{n_{0}}f\Vert_X)](w) = \varepsilon \Vert f - V_{n_{0}}f\Vert_{H^1(X)} \\
& \le C \cdot \varepsilon \Vert f\Vert_{H^1(X)}
\end{align*}
for a uniform constant $C$. Moreover, it can be shown that 
$I_2 \le 4\varepsilon \Vert f\Vert_{H^1(X)}$ by using the estimates $N(\varphi,w) 
\le  \log(1/\vert w\vert)$ and (\ref{dlVest}). For this it is convenient to introduce
a cut-off function $\psi \in C^\infty_0(\mathbb D)$ satisfying $0 \le \psi \le 1$, 
$\psi = 1$ on $\{z\colon \vert z \vert \le r\}$ 
and $\psi = 0$ on $\{z \in \mathbb D\colon \vert z\vert \ge (1+r)/2\}$. 
We refer to \cite[Prop.\ 2 and Thm.\ 3]{LST98} for the complete technical details.

Thus $\Vert \widetilde{C_\varphi} - \widetilde{C_\varphi}V_{n_{0}}\Vert \le C' \cdot \varepsilon$, 
where $C'$ does not depend on $\varepsilon$, so that 
$\widetilde{C_\varphi}$ is well approximated by the weakly compact
operators $\widetilde{C_\varphi}V_{n_{0}}$ for suitable $n_0$. This means that 
$\widetilde{C_\varphi}$ is weakly compact $H^1(X) \to H^1(X)$.
\end{proof}

Theorem \ref{H1} corresponds to the following  template for many results 
about weak compactness, 
as well as  other qualitative properties, of analytic composition operators 
on vector-valued spaces. 

\begin{proposition}\label{gwc}
 Let $A$ be a Banach space of analytic functions on $\mathbb D$ and  
$A(X)$ a vector-valued version of $A$, such that $(A,A(X))$ satisfies 
(AF1)--(AF4). Suppose that $X$ is a complex reflexive Banach space 
and $\varphi\colon \mathbb D \to \mathbb D$ is an analytic map, so that
$\widetilde{C_\varphi}$ is bounded $A(X) \to A(X)$. 
Assume moreover that the following conditions hold:

\begin{itemize}
\item[(C1)] if $C_\varphi$ is weakly compact  $A \to A$, then $C_\varphi$ is compact  $A \to A$,  and 

\item[(C2)] if $C_\varphi$ is compact  $A \to A$, then the vector-valued composition
$\widetilde{C_\varphi}$ is weakly compact  $A(X) \to A(X)$.
\end{itemize}

Then one has the characterization

\begin{itemize}
\item[(C)] $\widetilde{C_\varphi}$ is weakly compact 
$A(X) \to A(X)$ $\Leftrightarrow$ 
 $C_\varphi$ is compact  $A \to A$.
\end{itemize}
\end{proposition}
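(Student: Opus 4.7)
The plan is to deduce the equivalence (C) formally from Corollary~\ref{qcor}.(3) combined with the abstract hypotheses (C1) and (C2); the substantive analytic content is already packaged inside these hypotheses.

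For the direction $\widetilde{C_\varphi}$ weakly compact $\Rightarrow$ $C_\varphi$ compact, I would first recall that the class of weakly compact operators forms an operator ideal in the sense of Pietsch. Hence Corollary~\ref{qcor}.(3) applies with $\mathcal{I}$ the ideal of weakly compact operators and yields two conclusions: that $I_X$ is weakly compact (which is consistent with the standing hypothesis that $X$ is reflexive) and, more importantly, that $C_\varphi \colon A \to A$ is weakly compact. Hypothesis (C1) then immediately upgrades this to the compactness of $C_\varphi$ on $A$.

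The converse direction $C_\varphi$ compact $\Rightarrow$ $\widetilde{C_\varphi}$ weakly compact is nothing other than the content of hypothesis (C2); no further argument is required.

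Thus the proposition is essentially a tautological packaging of (C1) and (C2) via the factorization scheme of Proposition~\ref{GF}. The only thing requiring any care is the initial remark that weak compactness forms an operator ideal, so that Corollary~\ref{qcor}.(3) applies. There is no genuine obstacle inside this template statement itself; the real work is relocated to verifying (C1) and (C2) in concrete situations. For instance, in the motivating case $A = H^1$, the hypothesis (C1) is Sarason's rigidity theorem \cite{S92} (weak compactness forces compactness on $H^1$), and (C2) is established via the Stanton formula \eqref{Sta} and the de la Vall\'ee-Poussin approximation \eqref{dlVest} as in the proof of Theorem~\ref{H1}. Neither of these steps is automatic for a general pair $(A,A(X))$, so the utility of the proposition lies precisely in isolating the two analytic inputs a concrete space must supply.
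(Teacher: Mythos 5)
Your proof is correct and follows exactly the route the paper intends: the paper leaves Proposition~\ref{gwc} without an explicit proof precisely because, as you observe, the forward implication is Corollary~\ref{qcor}.(3) applied to the operator ideal of weakly compact operators followed by (C1), and the reverse implication is (C2) verbatim. Your closing remarks about where the real work lies (Sarason's theorem for (C1), the Stanton/de la Vall\'ee-Poussin argument for (C2) in the case $A=H^1$) match the paper's own commentary that the scheme is only a guiding principle.
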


We stress that the above general scheme is only a guiding principle and in practice
the techniques for establishing (C2)  depend on 
$A$ and its vector-valued extension $A(X)$. Moreover, the criteria 
for the compactness of the operator $C_\varphi\colon A \to A$ usually depend on $A$. 
It is straightforward to modify the scheme of Proposition \ref{gwc} to apply
 to vector-valued compositions $\widetilde{C_\varphi}\colon A(X) \to B(X)$ 
 between different spaces,
where $(A,A(X))$ and $(B,B(X))$ satisfy the properties (AF1)--(AF4).

Condition  (C1) is a problem of independent interest for composition operators
$A \to A$.
Recently Lefevre, Li, Queffelec and Rodriguez-Piazza
\cite{LLQR13} constructed the first example of a Banach space $A$ 
of complex-valued analytic functions on $\mathbb D$,
 where (C1) fails  for some symbol $\varphi$, see Example \ref{HO} below.
 
We next look at cases where Proposition \ref{gwc} apply.
Let $v_\alpha(z) = (1 - \vert z\vert^2)^\alpha$ 
for $z \in \mathbb D$ and $\alpha > -1$. 
The analytic function $f\colon \mathbb D \to X$ belongs to the weighted Bergman space 
$A^\alpha_p(X)$ if
\[
\Vert f\Vert_{A^\alpha_p(X)}^p = \int_{\mathbb D} \Vert f(z)\Vert_X^p v_\alpha(z) dA(z) <
\infty,
\]
where $dA$ is the area Lebesgue measure normalized by $A(\mathbb D) = 1$
and $1 \le p < \infty$.
The classical Bergman space $A^p(X)$ is obtained for $\alpha = 0$.
The following result was established in \cite{BDL01}, but the special 
case $A^1(X)$ was already  contained in \cite{LST98}.

\begin{theorem}\label{Berg}
Let $X$ be a complex reflexive Banach space, $\varphi\colon \mathbb D \to \mathbb D$
an analytic map and $\alpha > -1$. Then the following conditions are equivalent.

\begin{enumerate}
\item
$\widetilde{C_\varphi}\colon A^\alpha_1(X) \to A^\alpha_1(X)$ is weakly compact
\item
$C_\varphi\colon A^\alpha_1 \to A^\alpha_1$  is compact
\item
$\varphi$ satisfies the condition
\[
\limsup_{\vert w\vert \to 1} \frac{N_{\alpha+2}(\varphi,w)}{(\log (1/\vert w\vert))^{\alpha +2}} = 0.
\]
\end{enumerate}
\end{theorem}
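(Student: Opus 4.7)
The plan is to invoke the template of Proposition \ref{gwc} for the pair $(A^\alpha_1, A^\alpha_1(X))$, which satisfies (AF1)--(AF4) for every complex Banach space $X$, and to combine it with the known scalar compactness criterion for $C_\varphi$ on $A^\alpha_1$. The equivalence (2)$\Leftrightarrow$(3) is the weighted Bergman-space analogue of Shapiro's theorem, due in essence to MacCluer, and formulated through the generalized Nevanlinna counting function $N_{\alpha+2}$. The implication (1)$\Rightarrow$(2) follows by applying Corollary \ref{qcor}.(3) to the ideal of weakly compact operators: the factorization of Proposition \ref{GF}.(F1) forces $C_\varphi\colon A^\alpha_1 \to A^\alpha_1$ to be weakly compact, and the scalar $A^\alpha_1$-version of Sarason's result, which is condition (C1) of Proposition \ref{gwc}, upgrades this to compactness.

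The bulk of the argument lies in (2)$\Rightarrow$(1), establishing condition (C2). I would mimic the scheme from the proof of Theorem \ref{H1}. Since $z \mapsto \Vert f(z)\Vert_X$ is subharmonic on $\mathbb D$ for $f \in A^\alpha_1(X)$, a vector-valued weighted Stanton--MacCluer identity
\[
\Vert \widetilde{C_\varphi}(f)\Vert_{A^\alpha_1(X)} = \Vert f(0)\Vert_X + c_\alpha \int_{\mathbb D} N_{\alpha+2}(\varphi,w)\, d[\Delta(\Vert f\Vert_X)](w)
\]
should hold in parallel with \eqref{Sta}. I would then introduce the de la Vallee-Poussin operators $V_n$ on $A^\alpha_1(X)$: they form a uniformly bounded sequence, each $V_n$ factors through a finite direct sum of copies of $X$ and is therefore weakly compact when $X$ is reflexive, and they satisfy a compact-set approximation estimate $\Vert f(z) - V_n f(z)\Vert_X \le \varepsilon \Vert f\Vert_{A^\alpha_1(X)}$ for $\vert z\vert \le r$ and $n \ge n_0(\varepsilon,r)$. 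Given $\varepsilon > 0$, I would choose $r \in (0,1)$ with $N_{\alpha+2}(\varphi,w) \le \varepsilon (\log(1/\vert w\vert))^{\alpha+2}$ on $\{\vert w\vert > r\}$, and split the integral representation of $\Vert \widetilde{C_\varphi}(f - V_{n_0}f)\Vert_{A^\alpha_1(X)}$ into near-boundary and interior pieces. The near-boundary piece collapses to $O(\varepsilon \Vert f\Vert_{A^\alpha_1(X)})$ via the little-oh bound together with the above identity applied to $\varphi(z) = z$, while the interior piece is controlled by a smooth cut-off $\chi \in C_0^\infty(\mathbb D)$ supported in $\{\vert z\vert \le (1+r)/2\}$ combined with the uniform compact-set estimate.

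The main obstacle will be setting up the vector-valued weighted Stanton formula and handling the distributional Laplacian of $z \mapsto \Vert f(z)\Vert_X$ on the bounded region $\{\vert w\vert \le r\}$. The cut-off argument from \cite[Prop.\ 2]{LST98} must be adapted to the weight $v_\alpha$ and to the higher-order counting function $N_{\alpha+2}$, and the MacCluer analogue of Littlewood's inequality, giving $N_{\alpha+2}(\varphi,w) \le C(\log(1/\vert w\vert))^{\alpha+2}$ on $\mathbb D$, must be invoked to absorb the distributional integral against the identity-symbol counting function. Once these technical ingredients are in place, the norm estimate $\Vert \widetilde{C_\varphi} - \widetilde{C_\varphi} V_{n_0}\Vert \le C'\varepsilon$ follows as in the proof of Theorem \ref{H1} and yields the weak compactness of $\widetilde{C_\varphi}$ on $A^\alpha_1(X)$.
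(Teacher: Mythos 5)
Your proposal is correct and follows essentially the route the paper indicates: Theorem \ref{Berg} is quoted from \cite{BDL01} (the case $\alpha=0$ already in \cite{LST98}), and those proofs run precisely through the template of Proposition \ref{gwc} as you describe --- condition (C1) for $A^1_\alpha$ together with the scalar MacCluer--Shapiro criterion for (1)$\Rightarrow$(2)$\Leftrightarrow$(3), and a vector-valued Stanton-type change-of-variables formula combined with de la Vallee-Poussin approximation for (2)$\Rightarrow$(1). The only detail to watch is that the exact weighted kernel is $\int_0^1 N(\varphi_r,w)\,d\mu_\alpha(r)$, which is two-sidedly comparable to $N_{\alpha+2}(\varphi,w)$ rather than equal to it, so your displayed identity should be read as an equivalence with constants; this does not affect the argument.
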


Above $N_\beta(\varphi,\cdot)$ is the generalized Nevanlinna counting function defined 
for $\beta > 0$ by 
\[
N_\beta(\varphi,w) = \sum_{z \in \varphi^{-1}(w)} (\log(1/\vert z\vert)^\beta,
\quad w \in D \setminus \{\varphi(0)\},
\]
so that $N(\varphi,\cdot) = N_1(\varphi,\cdot)$.  
Actually, \cite[Thm.\  8]{BDL01} contains the estimate
\[
dist(\widetilde{C_\varphi}, W(A^\alpha_1(X))) \le C \cdot  
\limsup_{\vert w\vert \to 1} \frac{N_{\alpha+2}(\varphi,w)}{(\log (1/\vert w\vert))^{\alpha +2}},
\]
where $W(A^\alpha_1(X))$ denotes the linear subspace consisting of the weakly com\-pact operators  $A^\alpha_1(X) \to A^\alpha_1(X)$ and $C$ is an absolute constant.

We list some further Banach spaces $A(X)$ for  which the characterization 
(C) for weak com\-pactness of composition operators are known to hold. 
We emphasize that the arguments establishing (C1) and (C2)  usually are specific for  $A(X)$,
and the relevant com\-pactness conditions for $C_\varphi$ depend on $A$. 
One verifies by inspection that these pairs $(A,A(X))$ satisfy (AF1)--(AF4).
 
\begin{itemize}
\item Let $v\colon \mathbb D \to (0,\infty)$ be a bounded continuous weight function, and
 $f\colon \mathbb D \to X$ be an analytic function. Recall that $f \in H^\infty_v(X)$ if 
\[
\Vert f\Vert_{H^\infty_v(X)} = \sup_{\vert z\vert < 1} v(z) \Vert f(z)\Vert_X < \infty.
\]
Let $H^\infty_v = H^\infty_v(\mathbb C)$. The case $v \equiv 1$ gives the classical
spaces $H^\infty(X)$ and $H^\infty$ of bounded analytic functions. It was shown
in  \cite{LST98} that (C) holds on $H^\infty(X)$ and this was extended to 
the case $H^\infty_v(X)$ by different means in \cite{BDL01}.

\item  The vector-valued Bloch space ${\mathcal B}(X)$ \cite{LST98}.
Recall that $f \in {\mathcal B}(X)$ if
\[
\sup_{z \in \mathbb D} (1-\vert z\vert^2) \Vert f'(z)\Vert_X < \infty.
\] 
See \cite{BDL01} for an alternative approach via weak spaces (section \ref{weak} below).

\item The space $CT(X)$ of vector-valued Cauchy transforms \cite{LaT06}. 
The argument proceeds via composition operators on
the vector-valued harmonic Hardy space $h^1(X)$. The com\-pactness criterion 
for $CT$ is due to Bourdon, Cima and Matheson. 

\item Vector-valued $BMOA(X)$-spaces, see section \ref{BMOAs}.
 
\item Weak vector-valued versions  of the above spaces, see section \ref{weak}.  
\end{itemize}

A modified general scheme as in Proposition \ref{gwc} also applies 
to other operator ideal properties, namely, just replace weak com\-pactness 
by the relevant ideal property in (C1) and (C2). We state two results of this kind
for  $H^1(X)$, respectively $A^1_\alpha(X)$,
from  \cite[Thm.\ 7]{LST98} and  \cite[Cor.\ 9]{BDL01}.
The operator $U\colon X \to Y$ is called \textit{weakly conditionally com\-pact} 
if $(Ux_n)$ has a weak
Cauchy subsequence $(Ux_{n_{k}})$ for any bounded sequence $(x_n) \subset X$.
Recall that by Rosenthal's $\ell^1$-theorem, see \cite[2.e.5]{LT77},  
$I_X$ is weakly conditionally com\-pact if
and only if $X$ that does not contain any subspaces linearly isomorphic
to $\ell^1$. By Proposition \ref{GF} this is the relevant class of spaces here.

\begin{theorem}\label{wcc}
Suppose that the Banach space $X$ does not contain any sub\-spaces linearly isomorphic
to $\ell^1$, and $\varphi\colon \mathbb D \to \mathbb D$ is
an analytic map. Let $A = H^1$ or $A = A^1_\alpha$ for $\alpha > -1$. Then 
$\widetilde{C_\varphi}$ is weakly conditionally com\-pact 
$A(X) \to A(X)$ if and only if 
 $C_\varphi$ is com\-pact  $A \to A$.

\end{theorem}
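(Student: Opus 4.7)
The plan is to mirror the scheme of Proposition \ref{gwc} with weak compactness replaced throughout by weak conditional compactness, exploiting the fact that the weakly conditionally compact (wcc) operators form a norm-closed operator ideal in the sense of Pietsch. For the necessity direction, assume $\widetilde{C_\varphi}$ is wcc on $A(X)$. Corollary \ref{qcor}.(3), applied to the wcc ideal, forces both $I_X$ and $C_\varphi\colon A \to A$ to be wcc; the former is equivalent, by Rosenthal's $\ell^1$-theorem, to the standing hypothesis that $X$ contains no copy of $\ell^1$. To promote wcc of $C_\varphi$ to outright compactness I would note that both $A = H^1$ and $A = A^1_\alpha$ sit as closed subspaces of an $L^1$-space (on $\mathbb{T}$ with $dm$, respectively on $\mathbb{D}$ with $v_\alpha\,dA$), so both are weakly sequentially complete. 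Combined with wcc, weak sequential completeness of the target upgrades $C_\varphi\colon A \to A$ to a weakly compact operator; Sarason's theorem (for $H^1$), and the Bergman analogue underlying Theorem \ref{Berg} (for $A^1_\alpha$), then deliver compactness of $C_\varphi$ on $A$.

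For the sufficiency direction, I would replay the proof of Theorem \ref{H1} (and its Bergman variant from \cite{BDL01}) with one structural change. There the de la Vallée-Poussin operators $V_n$ on $A(X)$ factor through a finite direct sum of copies of $X$ and were therefore weakly compact when $X$ was reflexive; here, since $\ell^1 \not\hookrightarrow X$, Rosenthal's theorem implies that the identity on any finite power $X^N$ is wcc, and hence $V_n\colon A(X) \to A(X)$ is wcc as well. The norm approximation
\[
\Vert \widetilde{C_\varphi} - \widetilde{C_\varphi}\, V_{n_{0}}\Vert \le C \cdot \varepsilon,
\]
derived via Stanton's formula and Shapiro's condition in the proof of Theorem \ref{H1} (respectively its $N_{\alpha+2}$-analogue for $A^1_\alpha$ from Theorem \ref{Berg}), then realizes $\widetilde{C_\varphi}$ as a norm limit of the wcc operators $\widetilde{C_\varphi}\, V_{n_{0}}$, and norm-closedness of the wcc ideal completes the argument.

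Nothing new is needed at the level of the Stanton or Nevanlinna counting-function estimates; these are imported verbatim from Theorems \ref{H1} and \ref{Berg}. The only genuinely new input is Rosenthal's $\ell^1$-theorem, which replaces reflexivity as the criterion ensuring that $I_{X^N}$, and hence $V_n$, lies in the relevant closed ideal. The step I would scrutinize most carefully is the necessity argument where wcc on $A$ is upgraded to weak compactness: one must verify that $A^1_\alpha$ is indeed weakly sequentially complete, and that the Sarason-type implication that weakly compact $C_\varphi$ is already compact remains valid in the weighted Bergman setting, both of which are part of the folklore feeding into Theorem \ref{Berg}.
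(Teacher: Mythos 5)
Your proposal is correct and is precisely the argument the paper intends: it instantiates the modified scheme of Proposition \ref{gwc} with weak compactness replaced by weak conditional compactness, using Rosenthal's $\ell^1$-theorem to make the de la Vall\'ee-Poussin operators $V_n$ weakly conditionally compact and the norm-closedness of that ideal to conclude, exactly as in \cite[Thm.\ 7]{LST98} and \cite[Cor.\ 9]{BDL01}. The necessity step you flag for scrutiny is also the standard one (weak sequential completeness of $H^1$ and $A^1_\alpha$ as closed subspaces of $L^1$-spaces, followed by Sarason's theorem and its Bergman analogue), so no gap remains.
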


We mention for completeness that the cases $dim(X) < \infty$ are similar to the scalar case.

\begin{proposition}\label{comp}
Suppose that $dim(X) < \infty$, $(A,A(X))$ satisfies (AF1)--(AF4), 
and $\varphi\colon \mathbb D \to \mathbb D$ is
an analytic map. Then $\widetilde{C_\varphi}$ is com\-pact 
$A(X) \to A(X)$ if and only if  $C_\varphi$ is com\-pact  $A \to A$.
\end{proposition}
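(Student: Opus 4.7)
The plan is to handle the two implications separately. The implication that compactness of $\widetilde{C_\varphi}$ on $A(X)$ forces compactness of $C_\varphi$ on $A$ is immediate from Corollary \ref{qcor}.(3) applied to the ideal of compact operators (equivalently, one reads it off the factorization (F1) with unit vectors $x \in X$, $x^* \in X^*$ satisfying $\langle x^*, x \rangle = 1$).

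For the converse, suppose $n := \dim X < \infty$ and $C_\varphi \colon A \to A$ is compact. The idea is to use a basis of $X$ to write $\widetilde{C_\varphi}$ as a finite sum of operators each factoring through $C_\varphi$. Fix a basis $e_1, \ldots, e_n$ of $X$ together with its biorthogonal dual basis $e_1^*, \ldots, e_n^* \in X^*$, so that $x = \sum_{i=1}^n \langle e_i^*, x \rangle e_i$ for every $x \in X$. Applying this pointwise to $f \in A(X)$ and inserting the definitions of $J_{e_i}$ from (AF2) and $Q_{e_i^*}$ from (AF3) gives, for each $z \in \mathbb{D}$,
\[
\sum_{i=1}^n (J_{e_i} Q_{e_i^*} f)(z) = \sum_{i=1}^n \langle e_i^*, f(z) \rangle \, e_i = f(z).
\]
Because elements of $A(X)$ are $X$-valued analytic functions on $\mathbb{D}$, pointwise equality (or equivalently testing against the evaluations $\delta_z$ from (AF4)) yields the operator identity $I_{A(X)} = \sum_{i=1}^n J_{e_i} Q_{e_i^*}$.

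Next, I would record the commutation $\widetilde{C_\varphi} \circ J_x = J_x \circ C_\varphi$ for every $x \in X$ (both sides send $f \in A$ to $(f \circ \varphi) \otimes x$, as already used in the proof of Proposition \ref{GF}). Combining this with the identity above gives
\[
\widetilde{C_\varphi} = \widetilde{C_\varphi} \sum_{i=1}^n J_{e_i} Q_{e_i^*} = \sum_{i=1}^n J_{e_i} \, C_\varphi \, Q_{e_i^*}.
\]
Under the hypothesis that $C_\varphi$ is compact, each of the $n$ summands is compact (compact operators form a two-sided ideal), hence so is the finite sum $\widetilde{C_\varphi}$.

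There is really no substantive obstacle here; the argument is essentially the observation that the maps $(J_{e_i})$ and $(Q_{e_i^*})$ furnish a bounded linear isomorphism $A(X) \simeq A^n$ under which $\widetilde{C_\varphi}$ corresponds to the diagonal operator $\mathrm{diag}(C_\varphi, \ldots, C_\varphi)$. The only mild point of care is to justify the operator identity $I_{A(X)} = \sum_i J_{e_i} Q_{e_i^*}$ from the pointwise identity in $X$, which is legitimate precisely because the space $A(X)$ consists of $X$-valued functions on $\mathbb{D}$.
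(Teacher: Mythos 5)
Your proposal is correct and follows essentially the same route as the paper: the paper also fixes a biorthogonal system $\{(x_k,x_k^*)\}$ and writes $\widetilde{C_\varphi}(f)=\sum_{k=1}^n C_\varphi(x_k^*\circ f)\,x_k$, which is precisely your decomposition $\widetilde{C_\varphi}=\sum_k J_{x_k}C_\varphi Q_{x_k^*}$, and it likewise disposes of the other implication by appealing to the framework of Section \ref{frame} (Corollary \ref{qcor}). You merely spell out the operator identity $I_{A(X)}=\sum_k J_{x_k}Q_{x_k^*}$ and the commutation $\widetilde{C_\varphi}J_x=J_xC_\varphi$ more explicitly than the paper does.
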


\begin{proof}
Let  $n = dim(X)$ and fix a biorthogonal system $\{(x_r,x^*_s)\colon 1 \le r, s \le n\}$ for $X$, so that
$x = \sum_{k=1}^n x_k^*(x)x_k$ for $x \in X$. Hence any $f \in A(X)$ 
can be written as  $f(z) = \sum_{k=1}^n f_k(z)x_k$, where 
$f_k = x_k^* \circ f \in A$ for $k = 1,\ldots,n$. Consequently, 
if  $C_\varphi$ is com\-pact $A \to A$, then 
$
f \mapsto \widetilde{C_\varphi}(f) = \sum_{k=1}^n C_\varphi (f_k) x_k
$
is com\-pact $A(X) \to A(X)$. 
For the converse note that Section \ref{frame} applies to 
this setting. 
\end{proof}

\subsection*{Other results}
Hornor and Jamison  \cite{HJ99} characterized the isometrically 
equivalent compositions $\widetilde{C_\varphi}$ and $\widetilde{C_\psi}$ 
on $H^p(X)$, respectively $S^p(X)$, 
for $p \neq 2$ and $X$ a Hilbert space. Here $f \in S^p(X)$ if the derivative $f' \in H^p(X)$.
Sharma and Bhanu \cite{SB99} studied e.g.\ normal and  unitary
compositions on $H^2(X)$, where $X$ is a Hilbert space.
Bonet and Friz \cite{BF02} characterized the weakly com\-pact compositions 
on weighted vector-valued  locally convex spaces 
of analytic functions on $\mathbb D$.
Composition operators on the Hilbert space-valued Fock space of entire 
functions on $\mathbb C$ were considered by  Ueki \cite{U11}.
See also \cite{Wa05} for results on the vector-valued Nevanlinna class.

\section{Vector-valued $BMOA$-spaces}\label{BMOAs}

In this section we discuss in more detail the case of 
composition operators on vector-valued $BMOA$ spaces, 
since there are several natural  vector-valued versions of $BMOA$,
and condition (C1) is a problem of independent interest.

Recall that the analytic function $f\colon \mathbb D \to \mathbb C$ belongs to $BMOA$, the space of analytic functions of bounded mean oscillation, if 
\[
\vert f\vert_{*} = \sup_{a \in \mathbb D} \Vert f \circ \sigma_a - f(a)\Vert_{H^2} < \infty,
\]
where $\sigma_a(z) = (a-z)/(1-\overline{a}z)$ for $z \in \mathbb D$.
The Banach space $BMOA$  is equipped with the norm
$\Vert f\Vert_{BMOA} = \vert f(0)\vert + \vert f\vert_{*}$.
$BMOA$ is often considered as a M\"obius-invariant version of $H^2$,
but its  Banach space structure  is  comp\-li\-cated, see e.g.\ \cite{Mu}. 
Recall also that $(H^1)^* \approx BMOA$

There are by now several equivalent characterizations of com\-pact
compositions $C_\varphi\colon BMOA \to BMOA$, see 
\cite{LNST13} for a list. The following double criterion due to W. Smith \cite{Sm99}
is the most relevant one for our purposes: 

\smallskip

\noindent \textit{Let $\varphi\colon \mathbb D \to \mathbb D$ be an analytic
map. Then $C_\varphi$ is com\-pact $BMOA \to BMOA$ if and only if 
\begin{gather}
\tag{S1}
   \lim_{\vert \varphi(a)\vert \to 1} \sup_{0 < \vert w\vert  < 1} \vert w\vert^2 
   N(\sigma_{\varphi(a)} \circ \varphi \circ \sigma_a,w) = 0,  \\
\tag{S2}
   \lim_{t\to 1} \sup_{\{a\colon \vert \varphi(a)\vert \leq R\}} 
   m \bigl( \{\zeta \in \partial \mathbb D\colon
     \vert (\varphi \circ \sigma_a)(\zeta)\vert > t\} \bigr) = 0.
\end{gather}
Subsequently it was observed in \cite{La09}  that (S1) can be  restated as 
\begin{equation} \tag{L}\label{L}
   \lim_{\vert \varphi(a)\vert  \to 1}
   \Vert \sigma_{\varphi(a)}\circ\varphi\circ\sigma_a\Vert_{H^2} = 0.
\end{equation}
}

Let $f\colon \mathbb D \to X$ be an analytic function. We say that  $f \in BMOA(X)$ if
\[
\vert f\vert_{*,X} = \sup_{a \in \mathbb D} \Vert f \circ \sigma_a - f(a)\Vert_{H^2(X)} 
< \infty,
\]
and let $\Vert f\Vert_{BMOA(X)} = \Vert f(0)\Vert_X + \vert f\vert_{*,X}$. There are also
other natural possibilities. By departing from a well-known
characterization of $BMOA$ in terms of Carleson measures, see \cite[Thm.\ VI.3.4]{G81}, 
let   $f \in BMOA_{\mathcal C}(X)$ if
\[
\vert f\vert_{{\mathcal C},X} = \sup_{a \in \mathbb D} 
\int_{\mathbb D} \Vert f'(z)\Vert_X^2(1-\vert \sigma_a(z)\vert^2)dA(z) < \infty.
\]
The norm in $BMOA_{\mathcal C}(X)$ is 
$\Vert f\Vert_{BMOA_{\mathcal C}(X)} = \Vert f(0)\Vert_X + \vert f\vert_{{\mathcal C},X}$.
Blasco \cite{B00} showed that $BMOA(X) = BMOA_{\mathcal C}(X)$, with 
equivalent norms, if and only if $X$ is linearly isomorphic to a Hilbert space.
Thus $BMOA(X)$ and $BMOA_{\mathcal C}(X)$ are different vector-valued 
versions of $BMOA$. (In section \ref{weak} we will meet yet another vector-valued
version of $BMOA$.)

Laitila \cite{La05}, \cite{La07} initiated the study of composition operators
on vector-valued $BMOA$-spaces. He observed that $\widetilde{C_\varphi}$ 
is bounded $BMOA(X) \to BMOA(X)$ and
$BMOA_{\mathcal C}(X) \to BMOA_{\mathcal C}(X)$ for any self-map $\varphi\colon
\mathbb D \to \mathbb D$.
Moreover, if $X$ is a reflexive Banach space and $\varphi$ satisfies 
conditions (S1) and (S2), then 
$\widetilde{C_\varphi}$ is weakly com\-pact both $BMOA(X) \to BMOA(X)$ and
$BMOA_{\mathcal C}(X) \to BMOA_{\mathcal C}(X)$. 
In order to  obtain a complete characterization following  Proposition \ref{gwc} one 
has to verify condition (C1) for $BMOA$. 
This was actually a problem stated by Tjani in her Ph.D.\ thesis
\cite{Tj96} and Bourdon, Cima and Matheson \cite{BCM99},
which was eventually solved in \cite{LNST13} as follows.

\begin{theorem}\label{BMOA}
The following conditions are equivalent for $\varphi\colon \mathbb D \to \mathbb D$:
\begin{enumerate}
\item
$C_\varphi\colon BMOA \to BMOA$ is com\-pact
\item
$C_\varphi\colon BMOA \to BMOA$ is weakly com\-pact
\item
(S1) holds (alternatively, (L) holds)
\end{enumerate}
\end{theorem}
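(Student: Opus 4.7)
My approach is to prove the cycle $(1)\Rightarrow(2)\Rightarrow(3)\Rightarrow(1)$. The first implication is immediate. The genuinely new content is $(2)\Rightarrow(3)$, which resolves the problem of Tjani and of Bourdon--Cima--Matheson; the reverse implication $(3)\Rightarrow(1)$ is a compactness-from-(L) argument that either invokes Smith's double criterion together with a proof that (L) forces (S2) as well, or bypasses (S2) entirely via a direct test-function argument. Since the equivalence of (S1) and (L) is already noted in the excerpt, I will work throughout with the cleaner formulation (L).

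For $(3)\Rightarrow(1)$, I would use the standard test-sequence characterization: take a bounded sequence $(f_n)\subset BMOA$ with $f_n\to 0$ uniformly on compact subsets of $\mathbb D$ and aim to show $\|C_\varphi f_n\|_{BMOA}\to 0$. Setting $\tau_a:=\sigma_{\varphi(a)}\circ\varphi\circ\sigma_a$, which satisfies $\tau_a(0)=0$ and $\varphi\circ\sigma_a=\sigma_{\varphi(a)}\circ\tau_a$, the local seminorm at $a$ becomes
\[
\|f_n\circ\varphi\circ\sigma_a-f_n(\varphi(a))\|_{H^2}=\|g_{n,a}\circ\tau_a-g_{n,a}(0)\|_{H^2},
\]
where $g_{n,a}:=f_n\circ\sigma_{\varphi(a)}$. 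Combining Shapiro's Nevanlinna-counting identity for $\|\cdot\|_{H^2}$ with (S1) in its little-oh form, this quantity is dominated by $\varepsilon\|f_n\|_{BMOA}$ once $|\varphi(a)|>R$ is close enough to $1$. On the complementary region $|\varphi(a)|\le R$, uniform convergence of $f_n$ on the compact set $\overline{\varphi(\sigma_a(\mathbb D))\cap\{|w|\le R\}}$ (or a slightly larger fixed compact) handles the seminorm directly. Taking $\sup_a$ and then $n\to\infty$ yields the compactness conclusion.

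The main obstacle is $(2)\Rightarrow(3)$. Assume $C_\varphi$ is weakly compact on $BMOA$ but (L) fails, so there is a sequence $(a_n)$ with $|\varphi(a_n)|\to 1$ and $\|\tau_{a_n}\|_{H^2}\ge\delta>0$. The plan is to produce a bounded sequence $(h_n)\subset BMOA$ whose images $(C_\varphi h_n)$ admit no weakly convergent subsequence. Natural candidates are normalized M\"obius-translated logarithms of the form $h_n(z)=\log\frac{1}{1-\overline{\varphi(a_n)}z}$ (or a suitable variant built from $\sigma_{\varphi(a_n)}$), which are uniformly bounded in $BMOA$ and concentrate near the accumulation boundary point of $\varphi(a_n)$. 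By Fefferman duality $BMOA\simeq(H^1)^*$, I would test $(C_\varphi h_n)$ against a companion family of $H^1$-functionals, for instance $H^1$-atoms or reproducing-kernel combinations based at $a_n$, and show that the diagonal pairings are bounded below by a positive multiple of $\|\tau_{a_n}\|_{H^2}^2$, forcing an $\ell^\infty$-like biorthogonality structure in $(C_\varphi h_n)$ incompatible with weak convergence of any subsequence. The delicate technical point is aligning the geometric parameters so the diagonal pairings dominate the off-diagonal ones: this requires sharp estimates on compositions $h_m\circ\varphi\circ\sigma_{a_n}$ in both the coincident regime $m=n$, where $\|\tau_{a_n}\|_{H^2}$ must surface as a lower bound, and the separated regime $m\ne n$, where the pairings must be shown to decay (after passing to a further subsequence, using that $|\varphi(a_n)|\to 1$).
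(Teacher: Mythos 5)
Your overall architecture ($(1)\Rightarrow(2)$ trivial, all the content in $(2)\Rightarrow(3)$ and $(3)\Rightarrow(1)$) matches the shape of the result, but both nontrivial implications have genuine gaps. In $(3)\Rightarrow(1)$ the argument breaks on the region $\{a\colon |\varphi(a)|\le R\}$: the seminorm $\Vert f_n\circ\varphi\circ\sigma_a-f_n(\varphi(a))\Vert_{H^2}$ integrates the boundary values of $\varphi\circ\sigma_a$ over all of $\mathbb T$, and even when $|\varphi(a)|\le R$ the set $\{\zeta\colon |\varphi(\sigma_a(\zeta))|>t\}$ may carry substantial measure, so uniform convergence of $f_n$ on a fixed compact subset of $\mathbb D$ does not ``handle the seminorm directly.'' Controlling precisely this is the role of Smith's condition (S2), and the fact that (S1) alone implies compactness --- i.e.\ that (S2) is redundant --- is one of the two main points of \cite{LNST13}; it is proved there with measure-density estimates for the radial limits of $\varphi$, not by a routine normal-families argument. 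Your sketch silently assumes away the hard half of Smith's criterion.

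In $(2)\Rightarrow(3)$ the proposed mechanism cannot work as stated. Testing $(C_\varphi h_n)$ against functionals drawn from $H^1\subset BMOA^*$ only probes the weak-star topology of $BMOA\simeq (H^1)^*$, and every bounded sequence in $BMOA$ already has a weak-star convergent subsequence; moreover a biorthogonal pattern $\langle\phi_n, C_\varphi h_m\rangle\approx\delta_{nm}$ with $(\phi_n)$ bounded in $H^1$ is entirely consistent with $C_\varphi h_n\to 0$ weakly (compare an orthonormal sequence in a Hilbert space, which is biorthogonal to itself and weakly null). To contradict weak compactness you must do something strictly stronger: produce an $\ell^1$-type lower bound, or show that $C_\varphi$ fixes an isomorphic copy of a non-reflexive space. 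The proof the paper points to does the latter: assuming (L) fails, it constructs test functions in $VMOA$ and invokes the Leibov / M\"uller--Schechtman criterion to extract a subsequence equivalent to the unit vector basis of $c_0$ on which $C_\varphi$ is bounded below, so that $C_\varphi$ fixes a copy of $c_0$ --- something a weakly compact operator cannot do. Your ``delicate technical point'' about aligning diagonal and off-diagonal pairings is exactly where this $c_0$-extraction machinery (together with the measure-density analysis of $\varphi$) is needed, and the proposal offers no substitute for it.
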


It is part of the solution that condition  (S2) is redundant in 
Smith's characterization above.
The  combination of Theorem \ref{BMOA} with  \cite{La05}, \cite{La07} 
comp\-le\-tes the following result for these vector-valued
$BMOA$-spaces.

\begin{theorem}\label{BMOA(X)}
Let  $X$ be a reflexive Banach space, and $\varphi\colon \mathbb D \to \mathbb D$
an analytic function. Then the following conditions are equivalent.
\begin{enumerate}
\item
$\widetilde{C_\varphi}$ is weakly com\-pact $BMOA(X) \to BMOA(X)$
\item
$\widetilde{C_\varphi}$ is weakly com\-pact 
$BMOA_{\mathcal C}(X) \to BMOA_{\mathcal C}(X)$
\item
$C_\varphi\colon BMOA \to BMOA$ is com\-pact, that is, condition (S1) holds.
\end{enumerate}
\end{theorem}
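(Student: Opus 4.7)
The plan is to apply the general template of Proposition \ref{gwc} to each of the pairs $(BMOA, BMOA(X))$ and $(BMOA, BMOA_{\mathcal C}(X))$, which evidently satisfy (AF1)--(AF4). The scheme requires two ingredients: (C1), that weak compactness of $C_\varphi$ on $BMOA$ forces its compactness; and (C2), that compactness of the scalar $C_\varphi$ on $BMOA$ lifts to weak compactness of $\widetilde{C_\varphi}$ on the respective vector-valued space. The first is exactly the content of Theorem \ref{BMOA}, which equates weak compactness of $C_\varphi$ on $BMOA$ with compactness and with condition (S1) (equivalently, (L)).

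The implications (1) $\Rightarrow$ (3) and (2) $\Rightarrow$ (3) then follow immediately: by Corollary \ref{qcor}.(3), weak compactness of $\widetilde{C_\varphi}$ on either vector-valued space descends to weak compactness of $C_\varphi$ on $BMOA$, and Theorem \ref{BMOA} extracts (S1). Conversely, assume (S1). By Smith's theorem \cite{Sm99}, scalar compactness of $C_\varphi$ is equivalent to (S1) combined with (S2), so Theorem \ref{BMOA} forces (S2) to be redundant, i.e.\ (S1) alone guarantees (S2) as well. With both of Smith's conditions available, the prior work of Laitila \cite{La05,La07} supplies (C2) for $BMOA(X)$ and $BMOA_{\mathcal C}(X)$ under reflexivity of $X$, yielding (3) $\Rightarrow$ (1) and (3) $\Rightarrow$ (2).

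For completeness, I would sketch how Laitila's (C2) step follows the template of Theorem \ref{H1}. Introduce de la Vall\'ee-Poussin operators $V_n$, uniformly bounded on both $BMOA(X)$ and $BMOA_{\mathcal C}(X)$ and, thanks to reflexivity of $X$, weakly compact on each via factorization through a finite direct sum of copies of $X$. The norm estimate $\|\widetilde{C_\varphi}(I - V_n)\| \to 0$ is obtained by splitting the supremum over $a \in \mathbb D$ in the seminorm $|\cdot|_{*,X}$ according to whether $|\varphi(a)| > R$ or $|\varphi(a)| \le R$. On the far region, the identity
\[
(f \circ \varphi) \circ \sigma_a - (f \circ \varphi)(a) = \bigl( f \circ \sigma_{\varphi(a)} - f(\varphi(a)) \bigr) \circ \bigl( \sigma_{\varphi(a)} \circ \varphi \circ \sigma_a \bigr)
\]
combined with condition (L) and the subordination principle for $H^2(X)$ yields a uniform estimate on the $H^2(X)$-norm; on the near region, $\varphi \circ \sigma_a$ maps into a fixed compact subset of $\mathbb D$ on which $f - V_n f$ is arbitrarily small, uniformly over the unit ball of $BMOA(X)$, for large $n$. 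The analogous Carleson-measure estimate handles $BMOA_{\mathcal C}(X)$. The main obstacle in the theorem as a whole is thus (C1), resolved only in \cite{LNST13} and here invoked as a black box.
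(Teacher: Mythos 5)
Your proof is correct and follows essentially the same route as the paper: the survey's own argument is precisely the combination of Theorem \ref{BMOA} (supplying (C1) and the redundancy of (S2)) with the results of \cite{La05, La07} (supplying boundedness and the (C2) step under (S1) and (S2)), fed into the scheme of Proposition \ref{gwc} together with Corollary \ref{qcor}.(3). Your closing sketch of Laitila's (C2) argument is consistent with the cited works, which the survey itself invokes as a black box.
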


The argument for Theorem \ref{BMOA} is quite intricate.
It applies measure density ideas for the radial limits of $\varphi$ combined with
a criterion  due to Leibov (1986), respectively  M\"uller and Schechtman (1989), which 
allows to extract copies 
of the unit vector basis in $c_0$ from bounded sequences in the subspace
$VMOA$ of $BMOA$. We refer to \cite{LNST13} 
for the full technical details.

The results of  sections \ref{strongs} and \ref{BMOAs}
 might suggest  that the weak com\-pactness of
$C_\varphi\colon A \to A$ always implies its com\-pactness 
$A \to A$ for any  Banach space $A$ of analytic functions on 
$\mathbb D$. However, this  is not the case \cite[Thm. 4.1]{LLQR13}:

\begin{example}\label{HO}
Let $\varphi$ be the  lens map
\[
\varphi(z) = \frac{(1+z)^{1/2} - (1-z)^{1/2}}{(1+z)^{1/2} + (1-z)^{1/2}}, \quad z \in \mathbb D.
\]
Then there is an Orlicz function $\psi$ so that 
$C_\varphi$ is weakly com\-pact $H^\psi \to H^\psi$, but not com\-pact, where
$H^\psi$ is the non-reflexive Hardy-Orlicz  space of analytic functions of $\mathbb D$
defined by $\psi$. 
\end{example}

\begin{question}
Characterize the weakly com\-pact compositions on the space $H^\psi(X)$ above.   
The operator $C_\varphi$ in Example \ref{HO}  factors  through $H^4$ by construction, 
and $\widetilde{C_\varphi}$ through the
reflexive space $H^4(X)$ for reflexive spaces $X$. 
Thus $\widetilde{C_\varphi}$  is weakly com\-pact 
$H^\psi(X) \to H^\psi(X)$,
so that (C) cannot hold for  $H^\psi(X)$. 
\end{question}

To the best of our knowledge the first example of a weakly compact analytic
composition operator which is not compact was obtained in the context
of uniform algebras defined on infinite-dimensional domains. Let $U_E$ be the
open unit ball of the Tsirelson space $E$ and $\varphi\colon U_E\to U_E$ the map $x\mapsto x/2$.
It was shown in \cite[Example 3]{AGL} that the composition operator $f\mapsto f\circ\varphi$ is
weakly compact, but non-compact, $H^\infty(U_E)\to H^\infty(U_E)$. Here $H^\infty(U_E)$ is
the uniform algebra of bounded scalar-valued analytic functions $U_E\to\mathbb C$.

\section{Weak vector-valued spaces}\label{weak}

Bonet, Domanski and Lindstr\"om \cite{BDL01} introduced the class of weak spaces of
vector-valued analytic functions into the study of vector-valued composition operators. 
One of their aims was to provide an alternative approach 
to \cite{LST98}, but  the weak spaces 
are in general different from the spaces considered in sections \ref{strongs} and \ref{BMOAs}.
On the other hand, for the class of weak spaces 
there are some general results concerning vector-valued compositions.

Suppose that  $E$ is a Banach space of analytic functions 
$f\colon \UnitDisk \to \mathbb C$ satisfying
the following conditions:

\begin{itemize}
\item[(W1)] $E$ contains the constant functions,

\item[(W2)] the closed unit ball $B_E$ is com\-pact in the com\-pact open topology $\tau_{co}$ 
of $\UnitDisk$. 
\end{itemize}

Recall that the vector-valued function $f\colon \mathbb D \to X$ is analytic if and only if
$x^* \circ f$ is analytic $\mathbb D \to \mathbb C$ for all $x^* \in X^*$. This fact suggests
to define $f \in wE(X)$ if 
\begin{align*}
\Vert f \Vert_{wE(X)}=\sup_{\Vert x^* \Vert_{X^*} \le 1}\Vert x^* \circ f \Vert_{E} < \infty. 
\end{align*}
By the closed graph theorem $\Vert f \Vert_{wE(X)}$ is finite if and only if 
$x^* \circ f \in E$ for all $x^* \in X^*$.
We will say that $wE(X)$ is the weak space of vector-valued analytic functions
$\mathbb D \to X$ modelled on $E$. 
The spaces appearing in sections  \ref{strongs} and \ref{BMOAs}, whose
norms involve pointwise norm quantities such as $\Vert f(z)\Vert_X$, 
will in the sequel be called strong spaces. Such a distinction between strong and 
weak spaces is not precise, since e.g.\ $wH^\infty(X) = H^\infty(X)$. 
If $E$ is a Banach space of harmonic functions on $\mathbb D$ 
which satisfies (W1) and (W2), then one may similarly define the weak space $wE(X)$
of vector-valued harmonic functions $\mathbb D \to X$, see \cite{LaT06}.
Weak type spaces first appeared  in the theory of
vector measures, see e.g. \cite[chap. 13]{Di67}.
The weak Hardy spaces $wH^p(X)$, and in particular their harmonic versions $wh^p(X)$, 
have been studied by Blasco \cite{JBlasco87}, as well as in  \cite{freniche:fatou, freniche:survey}. 
 
Weak spaces $wE(X)$ have a dual nature, since they also admit a canonical
isometrically isomorphic representation as certain spaces of bounded operators. 
This general fact was observed in  \cite{BDL01}. 
Note first that if $E$  satisfies (W1) and (W2), then

\begin{itemize}
\item[(W3)] the evaluation maps $\delta_z  \in E^*$ for  $z \in \mathbb D$, 
where $\delta_z(f) = f(z)$  for $f \in E$.
\end{itemize}

The Dixmier-Ng theorem \cite{JNg} implies that $E = V^*$, where
\[
V=\{u^*\in E^*\colon u^* \textrm{ is $\tau_{co}$-continuous on $B_E$}\}.
\] 
The identification of $f \in E$ with $u^* \mapsto u^*(f)$ gives the isometric isomorphism 
$E \to V^*$.
In addition, $V = [\delta_z \in E^*\colon z \in \mathbb D]$ by Hahn-Banach, 
where $[B]$ denotes the closed linear
span of  the subset  $B \subset E^*$. 
 We next formulate the general  linearization result from \cite{BDL01}, 
 which also implies that $wE(X)$ is a Banach space. An analogue
holds for  weak harmonic spaces,  see \cite{LaT06}.

\begin{theorem}\label{thm:linearization}
Suppose that $E$ satisfies (W1) and (W2), 
let $V = [\delta_z \in E^*\colon z \in \mathbb D]$ and $X$ be a complex Banach space. 
Then there is an isometric isomorphism $\chi \colon L(V,X)\to wE(X)$, so that 
\begin{equation*}
(\chi (T))(z) = T(\delta_z), \quad  (\chi^{-1}(f))(\delta_z) = f(z),
\end{equation*}
hold for $T \in L(V,X)$, $f\in wE(X)$ and $z\in\UnitDisk$.
\end{theorem}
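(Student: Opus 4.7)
The plan is to build $\chi$ and its inverse by hand and identify the norm via the Dixmier--Ng identification $E = V^*$. The crucial observation is that for any $\varphi \in V^*$ the corresponding $g_\varphi \in E$ satisfies $g_\varphi(z) = \varphi(\delta_z)$, so $\|g_\varphi\|_E = \|\varphi\|_{V^*}$; this is the dictionary that converts operator norms of $T\colon V \to X$ into the weak norm of $wE(X)$.

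First, I would show that $\chi$ is well-defined and isometric. Fix $T \in L(V,X)$ and put $f(z) = T(\delta_z)$. For every $x^* \in X^*$ the functional $T^*x^* \in V^*$, so by Dixmier--Ng there is a unique $g \in E$ with $g(z) = (T^*x^*)(\delta_z) = x^*(f(z))$; this gives $x^* \circ f \in E$, hence $f$ is weakly, and therefore strongly, analytic $\mathbb D \to X$. Moreover
\[
\|f\|_{wE(X)} = \sup_{\|x^*\|\le 1} \|x^* \circ f\|_E = \sup_{\|x^*\|\le 1} \|T^*x^*\|_{V^*} = \|T^*\| = \|T\|,
\]
so $\chi$ is linear and isometric into $wE(X)$.

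Next, I would construct $\chi^{-1}$. Given $f \in wE(X)$, define $T$ on the linear span $V_0 = \operatorname{span}\{\delta_z\colon z \in \mathbb D\}$ by
\[
T\Bigl(\sum_{i} c_i \delta_{z_i}\Bigr) = \sum_i c_i f(z_i).
\]
To check well-definedness, suppose $u = \sum_i c_i \delta_{z_i}$ vanishes in $E^*$. For every $x^* \in X^*$ the function $x^* \circ f$ lies in $E$, so $\sum_i c_i x^*(f(z_i)) = u(x^* \circ f) = 0$; Hahn--Banach then forces $\sum_i c_i f(z_i) = 0$. The same computation bounds the norm: by Hahn--Banach,
\[
\Bigl\|\sum_i c_i f(z_i)\Bigr\|_X = \sup_{\|x^*\|\le 1} \bigl| u(x^* \circ f)\bigr| \le \|u\|_{E^*} \sup_{\|x^*\|\le 1}\|x^* \circ f\|_E = \|u\|_{E^*}\,\|f\|_{wE(X)}.
\]
Hence $T$ is well-defined and bounded on the dense subspace $V_0 \subset V$ with norm at most $\|f\|_{wE(X)}$, so it extends uniquely to an operator $T \in L(V,X)$ with $\chi(T) = f$ on the generators $\delta_z$, and by continuity everywhere.

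Combining the two steps, $\chi$ is a bijective isometry with the stated formulas. The only real subtlety is the well-definedness of $T$ on $V_0$; that step is resolved cleanly once one notices that the pairing $u(x^* \circ f)$ simultaneously controls both well-definedness and the norm estimate, with no separate argument needed for surjectivity since the operator produced automatically satisfies $T(\delta_z) = f(z)$ by construction.
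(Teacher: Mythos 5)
Your proof is correct and is essentially the standard argument: the paper itself gives no proof of Theorem \ref{thm:linearization} but simply quotes it from \cite{BDL01}, and your construction (reading $x^*\circ\chi(T)$ off from $T^*x^*\in V^*=E$ via Dixmier--Ng for the isometry, then defining the inverse on the dense span of the $\delta_z$ with the Hahn--Banach/duality estimate handling both well-definedness and the norm bound) is exactly the route taken in that reference. Nothing to add.
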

 
Special cases and variants of this linearization result were known earlier. 
The closest precursor is  the general results of Mujica \cite{JMujica} that apply to 
the case $E = H^\infty$. An explicit operator representation was obtained by 
Blasco \cite{JBlasco87} 
for the weak harmonic spaces $wh^p(X)$, where $1\le p \le \infty$. 

The study of composition operators between
weak spaces of analytic functions was initiated by Bonet, Doma\'nski and Lindstr\"om \cite{BDL01}, 
and this was extended in \cite{LaT06}  to weak spaces of vector-valued harmonic functions. 
Proposition 11 of \cite{BDL01} contains the result on weak com\-pactness stated below in Theorem \ref{thm:weakcpctnessweakspaces} for the weak  spaces, but
\cite{BDL01} only explicitly discusses the weighted $wH^\infty_v(X)$-spaces 
and the weak Bloch space $w{\mathcal B}(X)$ as examples. However, this approach  
applies to a large class of weak spaces of analytic functions, such as
the weak Hardy and weak Bergman spaces, as well as to $wBMOA(X)$, see the 
discussion below as well as  in \cite{LaT06, La05, La07}.

Let $\varphi\colon \mathbb D \to \mathbb D$ be an analytic map. 
The vector-valued composition 
$\widetilde{C_\varphi}$ is bounded $wE(X) \to wE(X)$ if and only if
$C_\varphi$ is bounded $E \to E$. In fact, if $x^* \in X^*$ then 
\begin{align*}
\Vert x^*\circ(\widetilde{C_\varphi} f)\Vert_{E}=\Vert C_\varphi(x^*\circ f)\Vert_{E}
\le \Vert C_\varphi \Vert \cdot \Vert x^*\circ f\Vert_{E},
\end{align*}
so that $\Vert \widetilde{C_\varphi}\Vert \le \Vert C_\varphi \Vert$.
For the converse it is worthwhile to point out that the framework from section
\ref{frame} applies to the weak spaces.

\begin{lemma}\label{waf}
If $E$ satisfies (W1) and (W2),  then the pair $(E,wE(X))$
satisfies (AF1)--(AF4) for any Banach space $X$
\end{lemma}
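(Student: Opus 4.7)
The plan is to verify each of the four conditions (AF1)--(AF4) directly from the definitions, which for the pair $(E, wE(X))$ is largely a bookkeeping exercise. Condition (AF1) is immediate from (W1), since the constant functions belong to $E$ by hypothesis.

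For (AF2), I would fix $x \in X$ and $f \in E$ and observe that $x^* \circ (f \otimes x) = x^*(x) \cdot f$, which lies in $E$ for every $x^* \in X^*$; taking the supremum over $\Vert x^*\Vert \le 1$ gives
\[
\Vert f \otimes x\Vert_{wE(X)} = \sup_{\Vert x^*\Vert \le 1} \Vert x^*(x) f\Vert_E = \Vert x\Vert \cdot \Vert f\Vert_E,
\]
so that $J_x$ is bounded with norm $\Vert x\Vert$. Condition (AF3) is even easier: for $x^* \in X^*$ and $g \in wE(X)$, the very definition of the $wE(X)$-norm yields $\Vert x^* \circ g\Vert_E \le \Vert x^*\Vert \cdot \Vert g\Vert_{wE(X)}$, so $Q_{x^*}$ is bounded with norm at most $\Vert x^*\Vert$.

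The only slightly substantive step is (AF4). Here I would use the Hahn--Banach identity
\[
\Vert f(z)\Vert_X = \sup_{\Vert x^*\Vert \le 1} |x^*(f(z))| = \sup_{\Vert x^*\Vert \le 1} |\delta_z(x^* \circ f)|
\]
for $f \in wE(X)$ and $z \in \mathbb D$, and invoke property (W3), already noted in the text as a consequence of (W1) and (W2), to obtain
\[
|\delta_z(x^* \circ f)| \le \Vert \delta_z\Vert_{E^*} \cdot \Vert x^* \circ f\Vert_E \le \Vert \delta_z\Vert_{E^*} \cdot \Vert f\Vert_{wE(X)}.
\]
Hence $\delta_z\colon wE(X) \to X$ is bounded with norm at most $\Vert \delta_z\Vert_{E^*}$. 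No step presents a genuine obstacle; the only content lies in invoking (W3), which has been dispatched in the preceding discussion.
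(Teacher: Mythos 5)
Your proof is correct and follows essentially the same route as the paper: (AF1)--(AF3) are immediate from the definitions, and (AF4) rests on the identity $x^*(f(z)) = \delta_z(x^*\circ f)$ combined with (W3) and the Hahn--Banach norming of $f(z)$. The only difference is that you spell out the norm computations that the paper dismisses as obvious.
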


\begin{proof}
Conditions (AF1)--(AF3) are obvious. Towards (AF4) note that
\[
x^*(\widetilde{\delta_z}(f)) = \delta_z(x^* \circ f), \quad f \in wE(X),\   x^* \in X^*, 
\]
where we momentarily use $\widetilde{\delta_z}$ for the vector-valued evaluations
$f \mapsto f(z)$ taking $wE(X)$ to $X$.
\end{proof}

We stress that the following basic weak com\-pactness result from {\cite{BDL01} for
vector-valued compositions holds on  all weak spaces $wE(X)$. 
The proof uses different tools compared to  the analytic arguments 
in Sections \ref{strongs} and \ref{BMOAs}. Recall again from Corollary \ref{qcor}  that 
$\widetilde{C_\varphi}$ is never com\-pact $wE(X) \to wE(X)$ 
whenever $X$ is infinite dimensional. 

\begin{theorem}\label{thm:weakcpctnessweakspaces}
Suppose that  $E$ is a Banach space of analytic functions on $\mathbb D$ 
that satisfy (W1) and (W2). Let $\varphi\colon \mathbb D \to \mathbb D$ be an analytic map and 
$X$  a reflexive Banach space. If $C_\varphi\colon E\to E$ is com\-pact,
then  $\widetilde{C_\varphi}$ is weakly com\-pact $wE(X) \to wE(X)$.
\end{theorem}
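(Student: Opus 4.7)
The plan is to apply Theorem~\ref{thm:linearization} to reduce $\widetilde{C_\varphi}$ to a concrete operator on $L(V,X)$, and then exploit the compactness of the base operator. Write $E=V^*$ with $V=[\delta_z : z\in\UnitDisk]$ (Dixmier-Ng), so that $\chi\colon L(V,X)\to wE(X)$ is the isometric identification.

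First I would observe that $C_\varphi\colon E\to E$ is weak-$*$ continuous. Indeed $C_\varphi^*\delta_z = \delta_{\varphi(z)} \in V$ for every $z$, so $C_\varphi^*(V)\subset V$ by linearity and density, and $C_\varphi = L^*$ where $L:=C_\varphi^*|_V\colon V\to V$. Schauder's theorem gives that $L$ is compact iff $C_\varphi$ is compact, so by hypothesis $L$ is compact. A direct computation with the formula defining $\chi$ shows $\chi^{-1}\widetilde{C_\varphi}\chi(T)=T\circ L$: for any $T\in L(V,X)$ and $z\in\UnitDisk$,
\[
\widetilde{C_\varphi}(\chi T)(z) = (\chi T)(\varphi(z)) = T(\delta_{\varphi(z)}) = T(L\delta_z) = \chi(T\circ L)(z).
\]
Thus $\widetilde{C_\varphi}$ is conjugate to the right-multiplication operator $R_L$ on $L(V,X)$, and the theorem reduces to a purely operator-theoretic claim: if $L$ is compact on $V$ and $X$ is reflexive, then $R_L\colon L(V,X)\to L(V,X)$ is weakly compact.

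To establish this I would use that reflexivity of $X$ makes $L(V,X)$ a dual space under the trace pairing, namely $L(V,X)\cong(V\hat\otimes_\pi X^*)^*$ (identifying $X$ with $X^{**}$). Then $R_L$ is the Banach-space adjoint of the operator $L\otimes\operatorname{id}_{X^*}$ on $V\hat\otimes_\pi X^*$ and is therefore weak-$*$ continuous, so $R_L(B_{L(V,X)})$ is weak-$*$ relatively compact by Banach-Alaoglu. To upgrade to weak relative compactness, I would take a bounded sequence $(T_n)\subset L(V,X)$, note that $K:=\overline{L(B_V)}$ is norm-compact in $V$ (as $L$ is compact), and observe that the restrictions $T_n|_K\colon K\to X$ form a norm-equicontinuous, pointwise bounded family into the reflexive space $X$. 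A vector-valued Arzelà-Ascoli extraction (pointwise weak relative compactness in $X$ together with equicontinuity on the compact metric set $K$) yields a subsequence $(T_{n_k})$ with $T_{n_k}|_K$ converging uniformly in the weak topology of $X$; this uniform-weak convergence on $K$ should then translate to weak convergence of $(T_{n_k}\circ L)$ in $L(V,X)$.

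The main obstacle is precisely this last step: converting uniform-on-$K$ weak convergence in $X$ into genuine weak convergence in $L(V,X)=(V\hat\otimes_\pi X^*)^*$, since the weak and weak-$*$ topologies on this dual do not coincide in general. Here the hypothesis that $L$ is \emph{compact} (not merely weakly compact) is essential. A cleaner alternative route is to invoke the Davis-Figiel-Johnson-Pelczy\'nski factorization of $L$ through a reflexive Banach space $Z$, splitting $R_L$ as $L(V,X)\to L(Z,X)\to L(V,X)$ and reducing the weak compactness to the passage through $L(Z,X)$ with both $Z$ and $X$ reflexive; even along that route, however, closing the argument requires exploiting the compactness of $L$ rather than just the reflexivity of $Z$.
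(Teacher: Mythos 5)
Your reduction coincides with the paper's: via Theorem~\ref{thm:linearization} and the identity $(C_\varphi)^*\delta_z=\delta_{\varphi(z)}$ one conjugates $\widetilde{C_\varphi}$ to the right-multiplication operator $R_L\colon T\mapsto T\circ L$ on $L(V,X)$, where $L=(C_\varphi)^*|_V$ is compact. At that point the paper does not prove the weak compactness of $R_L$ from scratch; it invokes a nontrivial external result on two-sided multiplications $S\mapsto ASB$ of operator spaces (Saksman--Tylli \cite[Prop.~2.3]{ST06}; see also \cite{JLS99}), applied with $A=I_X$ weakly compact and $B=L$ compact. You instead attempt a direct proof, and it stalls exactly where you say it does. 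That stall is a genuine gap, not a routine verification: weak compactness in $L(V,X)\cong(V\hat\otimes_\pi X^*)^*$ requires convergence against all of $L(V,X)^*$, which is in general strictly larger than the predual, so weak-$*$ compactness of $R_L(B_{L(V,X)})$ together with uniform weak convergence of $T_{n_k}$ on the norm-compact set $K=\overline{L(B_V)}$ does not by itself produce a weakly convergent subsequence of $(T_{n_k}\circ L)$. Note also that compactness of $L$ is indispensable here: for $V=X=\ell^2$ and $L=I$ the map $R_L$ is the identity of the nonreflexive space $L(\ell^2)$, so no argument using only weak compactness of $L$, or reflexivity of a DFJP intermediate space, can close the step.

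The missing ingredient is a criterion for relative weak compactness inside the space $K(V,X)$ of compact operators, into which $R_L$ maps all of $L(V,X)$ precisely because $L$ is compact. One then exploits that the adjoints $(TL)^*=L^*T^*$ with $\|T\|\le 1$ factor through the compact operator $L^*\colon X^*\to V^*$, while the orbits $\{TLv\colon \|T\|\le 1\}$ are relatively weakly compact in the reflexive space $X$; weak-compactness criteria of this type for bounded subsets of $K(V,X)$ go back to Kalton \cite{Kal74} and are what \cite[Prop.~2.3]{ST06} packages. So either cite that proposition, as the paper does, or state and prove such a criterion explicitly; as written, your argument establishes the correct reduction but leaves the decisive operator-theoretic step unproved.
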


\begin{proof}
Assume that $C_\varphi\colon E \to E$ is com\-pact. Its adjoint  
$(C_\varphi)^*\colon E^* \to E^*$ satisfies
\[
(C_\varphi)^*(\delta_z)=\delta_{\varphi(z)}, \quad z\in\UnitDisk,
\]
so that  $(C_\varphi)^*(V)\subset V$. We obtain the factorization
$\widetilde{C_\varphi} = \chi\circ U_\varphi\circ  \chi^{-1}$, where $U_\varphi$ is the 
operator composition map 
\[
T\mapsto I_X\circ T\circ (C_\varphi)^*|_{V}; \quad L(V, X)\to L(V, X),
\]
and $\chi$ is   the isometric isomorphism $L(V,X) \to wE(X)$ from Theorem \ref{thm:linearization}.
Since $(C_\varphi)^*|_{V}$ is a com\-pact operator $V \to V$ by duality,
and $I_X$ is weakly com\-pact, it follows from a general result of Saksman and Tylli, 
see \cite[Prop.\ 2.3]{ST06}, that
the operator composition  $U_\varphi$ is weakly com\-pact $L(V,X) \to L(V,X)$. 
Consequently $\widetilde{C_\varphi}$ is weakly com\-pact $wE(X) \to wE(X)$. 
\end{proof}

Theorem \ref{thm:weakcpctnessweakspaces} verifies
condition (C2) from Proposition \ref{gwc} for the weak spaces $wE(X)$. 
The following observation includes many examples.

\begin{proposition}\label{gws}
Suppose that  $E$ is a Banach space of analytic functions on $\mathbb D$ that
satisfy (W1) and (W2), let $\varphi\colon \mathbb D \to \mathbb D$ be an analytic map so that 
$C_\varphi$ is bounded  $E \to E$ and 
$X$  a reflexive Banach space. Suppose moreover: 

\begin{itemize}

\item[(C1)] if $C_\varphi$ is weakly com\-pact  $E \to E$, then $C_\varphi$ is com\-pact  $E \to E$.  
\end{itemize}

Then one has the characterization

\begin{itemize}
\item[(C)] $\widetilde{C_\varphi}$ is weakly com\-pact 
$wE(X) \to wE(X)$ $\Leftrightarrow$ 
 $C_\varphi$ is com\-pact  $E \to E$.
\end{itemize} 

Moreover, (C1) holds e.g.\ if $E$ is one of the following spaces: 
$H^1, A^1_\alpha$ for $\alpha > -1$, $BMOA$, 
$H^\infty_v$, where $v$ is a bounded continuous weight on $\mathbb D$, or $\mathcal B$.
\end{proposition}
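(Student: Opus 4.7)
The plan is to prove the characterization (C) directly from Theorem \ref{thm:weakcpctnessweakspaces} together with the general framework of Section \ref{frame}, and then to verify (C1) case-by-case by invoking known results.

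First I would dispatch the ``$\Leftarrow$'' direction, which does not use (C1) at all: if $C_\varphi$ is compact $E \to E$, then since $X$ is reflexive and $E$ satisfies (W1) and (W2), Theorem \ref{thm:weakcpctnessweakspaces} gives immediately that $\widetilde{C_\varphi}$ is weakly compact $wE(X) \to wE(X)$. For the converse ``$\Leftarrow$-to-$\Leftarrow$'' direction, suppose $\widetilde{C_\varphi}$ is weakly compact $wE(X) \to wE(X)$. By Lemma \ref{waf}, the pair $(E, wE(X))$ satisfies (AF1)--(AF4), so the factorization (F1) of Proposition \ref{GF} is at our disposal. Applying Corollary \ref{qcor}.(3) with $\mathcal I$ taken to be the operator ideal of weakly compact operators, I conclude that $C_\varphi\colon E \to E$ is weakly compact, and assumption (C1) then upgrades this to compactness of $C_\varphi$ on $E$. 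This establishes (C).

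It remains to verify that (C1) holds for the listed spaces. For $E = H^1$ this is Sarason's theorem \cite{S92}, already recalled in the proof of Theorem \ref{H1}; for $E = BMOA$ it is precisely the equivalence (1)$\Leftrightarrow$(2) in Theorem \ref{BMOA}; and for $E = A^1_\alpha$ it follows by applying the equivalence (1)$\Leftrightarrow$(2) of Theorem \ref{Berg} in the scalar case $X = \mathbb C$ (which is reflexive), whence weak compactness of $C_\varphi$ on $A^1_\alpha$ coincides with compactness. For $E = H^\infty_v$ and for the Bloch space $E = \mathcal B$, the equivalence is contained in \cite{LST98, BDL01}; the standard route in the $H^\infty_v$ case is to show that weak compactness of $C_\varphi$ forces $\varphi$ to satisfy a ``strict'' condition on the weight $v$ that is in fact known to characterize compactness of $C_\varphi$ on $H^\infty_v$.

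The main obstacle is the verification of (C1) for $E = BMOA$, which is far from formal and requires the measure-density and $c_0$-extraction techniques of \cite{LNST13} that underlie Theorem \ref{BMOA}. Indeed, Example \ref{HO} demonstrates that (C1) can genuinely fail on suitable non-reflexive Banach spaces of analytic functions, so the case-by-case verification in the ``moreover'' clause is the substantive content of the proposition. Once (C1) has been established for a given $E$, however, the deduction of (C) itself is essentially formal and proceeds uniformly through the abstract framework of Section \ref{frame}.
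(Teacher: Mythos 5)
Your proposal is correct and follows essentially the same route the paper intends: Theorem \ref{thm:weakcpctnessweakspaces} gives the implication from compactness of $C_\varphi$ on $E$ to weak compactness of $\widetilde{C_\varphi}$ on $wE(X)$, while Lemma \ref{waf} together with Corollary \ref{qcor}.(3) (applied to the ideal of weakly compact operators) and hypothesis (C1) gives the converse, and the ``moreover'' clause is handled by citing Sarason, Theorem \ref{BMOA}, Theorem \ref{Berg} with $X=\mathbb C$, and the $H^\infty_v$/Bloch results of \cite{LST98,BDL01}, exactly as the surrounding text of the paper indicates.
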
  

The preceding examples cover results for $wH^\infty_v(X)$ and $w{\mathcal B}(X)$ from \cite{BDL01},  $wH^1(X)$  \cite{LaT06},  and  $wBMOA(X)$ (combine Theorem \ref{BMOA}
with \cite{La05, La07}).

The results of sections \ref{strongs} --  \ref{weak} raise the question of what is the precise connection 
between these strong and weak spaces of vector-valued analytic functions.
Clearly $wH^2(\ell^2) \approx L(\ell^2)$ by Theorem \ref{thm:linearization}, whereas 
$H^2(\ell^2)$ is a separable Hilbert space, so the difference can be huge.
On the other hand,  \cite{BDL01} observed that 
$wH^\infty_v(X) = H^\infty_v(X)$ (equal norms)
and  $w{\mathcal B}(X) \approx {\mathcal B}(X)$ (equivalent norms).
It is evident  that e.g.\  $H^p(X) \subset wH^p(X)$, and
\[
\Vert f \Vert_{wH^p(X)}\le \Vert f \Vert_{H^p(X)},\quad f \in H^p(X),
\]
where  $1 \le p < \infty$ and $X$ is any Banach space. 
Blasco \cite{JBlasco87} observed that 
$h^1(C(\UnitCircle )) \varsubsetneq wh^1(C(\UnitCircle))$ and 
$h^p(L^{p'}) \varsubsetneq wh^p(L^{p'})$ for $1 < p < \infty$. 
Subsequently Freniche, Garc\'\i a-V\'azquez and Rodr\'\i guez-Piazza
\cite{freniche:fatou, freniche:survey} exhibited functions $f \in wh^p(X) \setminus h^p(X)$ and $g \in wH^p(X) \setminus H^p(X)$ for $1 \le p < \infty$ and any $X$. 
Fairly concrete functions of this kind were provided in \cite{La05, LaT06},  
and \cite{LTY09} contains  the analogous results for the weak vs.\ strong  Bergman norms.
In fact, the norms 
\[
\Vert \cdot \Vert_{wH^p(X)} \nsim \Vert \cdot \Vert_{H^p(X)}
\]
are non-equivalent on $H^p(X)$.
Strict inclusions $BMOA(X) \varsubsetneq wBMOA(X)$ and 
$BMOA_\mathcal C(X) \varsubsetneq wBMOA(X)$ for any infinite-dimensional $X$
were obtained in \cite{La05, La07}. 
A common feature of these examples for arbitrary $X$ is the use of 
Dvoretzky's $\ell^n_2$-theorem to transfer from the Hilbert space setting to $X$.

The linearization from Theorem \ref{thm:weakcpctnessweakspaces} can also be used  for 
other purposes. The following result  from \cite{BDL01} concern
 weak conditional com\-pactness on the spaces $wE(X)$. 

\begin{theorem}\label{thm:wcc}
Suppose that  $E$ is a Banach space of analytic functions on $\mathbb D$ that
satisfy (W1) and (W2). Let $\varphi\colon \mathbb D \to \mathbb D$ be an analytic map and 
$X$  a  Banach space that does not contain any subspaces linearly isomorphic to
$\ell^1$. If $C_\varphi\colon E\to E$ is com\-pact,
then  $\widetilde{C_\varphi}$ is weakly conditionally com\-pact $wE(X) \to wE(X)$.
\end{theorem}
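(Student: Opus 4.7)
The plan is to run the proof of Theorem \ref{thm:weakcpctnessweakspaces} verbatim, replacing weak compactness by weak conditional compactness throughout. Concretely, by Theorem \ref{thm:linearization} we have the factorization
\[
\widetilde{C_\varphi} = \chi \circ U_\varphi \circ \chi^{-1},
\]
where $\chi \colon L(V,X) \to wE(X)$ is the isometric isomorphism and $U_\varphi \colon L(V,X) \to L(V,X)$ is the operator composition map $T \mapsto I_X \circ T \circ (C_\varphi)^*|_V$. Since $C_\varphi \colon E \to E$ is compact by hypothesis, Schauder's theorem together with the inclusion $(C_\varphi)^*(V) \subset V$ (established via $(C_\varphi)^*(\delta_z) = \delta_{\varphi(z)}$, just as in Theorem \ref{thm:weakcpctnessweakspaces}) gives that $(C_\varphi)^*|_V \colon V \to V$ is compact. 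Since $X$ contains no isomorphic copy of $\ell^1$, Rosenthal's $\ell^1$-theorem implies that $I_X$ is weakly conditionally compact.

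What remains is the weak conditional compactness analogue of \cite[Prop.~2.3]{ST06}: if $A \colon X \to X$ is weakly conditionally compact and $B \colon V \to V$ is compact, then the operator composition $T \mapsto A T B$ is weakly conditionally compact on $L(V,X)$. Applied to $A = I_X$ and $B = (C_\varphi)^*|_V$, this yields that $U_\varphi$ is weakly conditionally compact, and therefore so is $\widetilde{C_\varphi}$, since $\chi$ is an isometric isomorphism.

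The main obstacle is the operator-composition preservation result for weak conditional compactness. One would argue as follows: let $(T_n) \subset L(V,X)$ be bounded, and let $K = \overline{B(B_V)}$, which is norm-compact in $V$ by compactness of $B$. The family $\{T_n B : n \in \mathbb{N}\}$ is uniformly bounded, and $(T_n B x)_{n \geq 1}$ is a bounded sequence in $X$ for each $x \in B_V$. Since $A = I_X$ is weakly conditionally compact, Rosenthal's theorem applied pointwise, combined with a standard diagonal extraction over a countable norm-dense subset of $K$ and the equicontinuity of $\{A T_n B\}$ on the compact set $K$, produces a subsequence $(A T_{n_k} B)$ that is weakly Cauchy in $L(V,X)$. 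This is precisely the analogue of the reasoning used in the weakly compact case in \cite{ST06}, with the Eberlein-Šmulian/weak compactness step replaced by Rosenthal's dichotomy; the compactness of $B$ is what reduces the problem from the non-compact ball $B_V$ to the compact set $K$, where the $\ell^1$-free hypothesis on $X$ can be effectively exploited.
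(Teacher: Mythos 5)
Your overall strategy is exactly the paper's: the paper proves this theorem by repeating the factorization $\widetilde{C_\varphi}=\chi\circ U_\varphi\circ\chi^{-1}$ from Theorem \ref{thm:weakcpctnessweakspaces} and then invoking the composition-of-operator-ideals result of Lindstr\"om and Schl\"uchtermann \cite{JLS99} to deduce the weak conditional compactness of $U_\varphi\colon T\mapsto I_X\circ T\circ (C_\varphi)^*|_V$ from the weak conditional compactness of $I_X$ (Rosenthal's $\ell^1$-theorem) and the compactness of $(C_\varphi)^*|_V$ (Schauder plus $(C_\varphi)^*(\delta_z)=\delta_{\varphi(z)}$). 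Up to and including your identification of the needed lemma, the proposal is correct and identical in approach to the paper.

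The one genuine gap is in your final paragraph, where you try to prove that lemma yourself. The diagonal extraction over a countable dense subset of the norm-compact set $K=\overline{B(B_V)}$ only produces a subsequence $(T_{n_k})$ such that $(T_{n_k}Bv)$ is weakly Cauchy in $X$ for every $v\in V$; that is, $(T_{n_k}B)$ is Cauchy in the weak operator topology. Being weakly Cauchy in the Banach space $L(V,X)$ is a strictly stronger requirement: one needs convergence of $\Phi(T_{n_k}B)$ for \emph{every} $\Phi\in L(V,X)^*$, and this dual contains many functionals that are not finite combinations of $T\mapsto x^*(Tv)$, so equicontinuity on $K$ does not close the gap. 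Bridging the difference between the weak operator topology and the weak topology of $L(V,X)$ is precisely the nontrivial content of \cite[Prop.~2.3]{ST06} and of \cite{JLS99}; the lemma should therefore be quoted (as the paper does) rather than treated as a routine pointwise diagonalization.
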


The proof is analogous to that of Theorem \ref{thm:weakcpctnessweakspaces}, 
but instead apply \cite{JLS99} to deduce the  weak conditional com\-pactness of the 
operator composition $U_\varphi$.

Since  $wH^2(\ell^2) \approx L(\ell^2)$ is non-reflexive 
one may also look for a characterization of weakly com\-pact 
$\widetilde{C_\varphi}\colon wH^2(\ell^2) \to wH^2(\ell^2)$.
Note that the following observation is not included in Proposition \ref{gws}
since $H^2$ is reflexive.

\begin{proposition}\label{wHardyex}
 $\widetilde{C_\varphi}$ is weakly com\-pact $wH^2(\ell^2) \to wH^2(\ell^2)$
 if and only if  $\varphi$ satisfies Shapiro's condition (\ref{Sh}).
\end{proposition}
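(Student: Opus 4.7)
The strategy is to linearize via Theorem~\ref{thm:linearization} and reduce to a known characterization of weakly compact multiplications on $L(\ell^2)$. Because $H^2$ is a reflexive Hilbert space, the predual $V = [\delta_z : z \in \mathbb D] \subset (H^2)^*$ coincides isometrically with $H^2$ itself (the reproducing kernels $k_z$ span $H^2$), so Theorem~\ref{thm:linearization} yields an isometric isomorphism
\[
\chi : wH^2(\ell^2) \longrightarrow L(V, \ell^2) \cong L(\ell^2).
\]
As in the proof of Theorem~\ref{thm:weakcpctnessweakspaces}, $\chi$ intertwines $\widetilde{C_\varphi}$ with the right-multiplication operator $U_\varphi : T \mapsto T \circ S$ on $L(\ell^2)$, where $S := (C_\varphi)^*|_V$.

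The forward implication is immediate: if $\varphi$ satisfies Shapiro's condition~(\ref{Sh}), then $C_\varphi$ is compact on $H^2$ by Shapiro's theorem, hence $S$ is compact on $V$ by duality, and Theorem~\ref{thm:weakcpctnessweakspaces} delivers the weak compactness of $\widetilde{C_\varphi}$. For the converse, I would invoke the finer Saksman--Tylli fact from \cite{ST06} that, on a separable Hilbert space, the right multiplier $R_S : T \mapsto TS$ is weakly compact on $L(\ell^2)$ if and only if $S$ itself is compact. A natural route to this equivalence is to exploit the duality $L(\ell^2) = S_1(\ell^2)^*$: $R_S$ is the Banach-space adjoint of the left multiplication $L_S : R \mapsto SR$ on the trace class $S_1(\ell^2)$, so Gantmacher's theorem reduces matters to the weak compactness of $L_S$ on $S_1(\ell^2)$, which in turn can be shown to force compactness of $S$ by testing $L_S$ on rank-one tensors $e_n \otimes e_n$ along an orthonormal sequence on which $S$ fails to be norm-null. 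Granting this, weak compactness of $\widetilde{C_\varphi}$ on $wH^2(\ell^2)$ forces $S$ to be compact on $V$, equivalently $C_\varphi$ to be compact on $H^2$, which is the same as~(\ref{Sh}).

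The crux, and the main obstacle, is precisely the converse direction in this Saksman--Tylli characterization. The standard factorization tools of Proposition~\ref{GF} are powerless here because $H^2$ is reflexive, so $C_\varphi$ is automatically weakly compact on $H^2$ and one cannot extract any compactness information from Corollary~\ref{qcor}(3). One genuinely has to exploit the non-reflexivity of $L(\ell^2)$ together with Hilbert-space-specific ingredients (the Schmidt decomposition, trace duality, the natural basis of matrix units in $S_1(\ell^2)$) to upgrade weak compactness of $U_\varphi$ into actual compactness of $S$.
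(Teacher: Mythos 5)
Your argument is essentially the paper's own proof: both linearize via Theorem~\ref{thm:linearization} to identify $\widetilde{C_\varphi}$ with the right multiplication $T \mapsto T\circ (C_\varphi)^*|_{V}$ on $L(H^2,\ell^2)\cong L(\ell^2)$, obtain sufficiency from Theorem~\ref{thm:weakcpctnessweakspaces}, and for necessity invoke the Saksman--Tylli fact that weak compactness of this multiplication forces compactness of $(C_\varphi)^*|_{V}$, hence of $C_\varphi$ on $H^2$. The only difference is that the paper simply cites \cite[Example 2.6]{ST06} for that last fact, whereas you sketch a proof of it via trace duality and Gantmacher's theorem; your sketch is consistent with the standard argument (though the final step needs a suitably chosen test operator in $L(\ell^2)=S_1(\ell^2)^*$, not merely the rank-one tensors themselves).
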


\begin{proof}
In view of Theorem \ref{thm:weakcpctnessweakspaces} there remains to show that
the weak compactness of  $\widetilde{C_\varphi}\colon wH^2(\ell^2) \to wH^2(\ell^2)$
implies condition (\ref{Sh}). As in  the proof of 
Theorem \ref{thm:weakcpctnessweakspaces} let $U_\varphi$ 
be the operator composition map
\[
S \mapsto S \circ (C_\varphi)^*|_{V}, \quad  L(V,\ell^2) \to L(V,\ell^2),
\]
 where $V = [\delta_z: z \in \mathbb{D}] = H^2$. We get that 
 $U_\varphi = \chi^{-1} \circ \widetilde{C_\varphi} \circ \chi$ 
is weakly compact $L(H^2,\ell^2) \to L(H^2,\ell^2)$.
It is known, see e.g. \cite[Example 2.6]{ST06}, that for such operator compositions
this yields the compactness
of $(C_\varphi)^*|_{V}$ on $V$. Hence  $C_\varphi$ is compact $H^2 \to H^2$,
so that  (\ref{Sh}) holds.
\end{proof}

The corresponding picture for the general class $wE(X)$ is quite complicated 
for reflexive $E$, and remains open, since 
the spaces $wE(X)$ can also be reflexive. For instance, the weak Hardy spaces
$wH^2(\ell^p) \approx  L(H^2, \ell^p) =  K(H^2,\ell^p)$ are reflexive for $1 < p < 2$ 
by Pitt's theorem \cite[Prop.\ 2.c.3]{LT77} and \cite[Sect.\ 2, Cor.\ 2]{Kal74}. Here
 $K(X,Y)$ denotes the space of com\-pact operators $X \to Y$.

\section{Compositions from weak to strong spaces}

A different line of study  
concerns the mapping properties of composition operators from 
weak to strong  spaces of analytic functions on $\mathbb D$, such as $wH^p(X) \to H^p(X)$.
This line was initiated by Laitila, Tylli and Wang in \cite{LTY09} for the Hardy and Bergman spaces, and subsequently the approach has been extended to 
weighted Bergman and Dirichlet spaces by Wang  \cite{JWang07, JWang08, JWang11}.
The question which motivated \cite{LTY09} came from S.~Kaijser for $X=\ell^2$.
 Recall from section \ref{weak} that e.g.\ $wH^2(\ell^2) \approx L(\ell^2)$ while
$H^2(\ell^2)$ is a separable Hilbert space, so that  boundedness of a  composition operator
$wH^2(\ell^2) \to H^2(\ell^2)$ entails strong compression.

Somewhat  surprisingly, the boundedness of $C_\varphi\colon wH^p(X)\to H^p(X)$  for 
$2\le p < \infty$ is related to composition operators in the Hilbert-Schmidt class
on $H^2$. Recall from \cite{JST73, CMC95} that $C_\varphi$ 
is a Hilbert-Schmidt operator on $H^2$ precisely when
\begin{align*}
\Vert C_\varphi \Vert_{HS}^2 = \int_{\UnitCircle} \frac{1}{1-|\varphi(\zeta)|^2}dm(\zeta)<\infty.
\end{align*}
The following result is taken from \cite{LTY09}, which also contains a formally similar
result for the vector-valued Bergman spaces. Note that results of this type
 have no counterparts in the scalar-valued theory.
 
\begin{theorem}\label{thm:weaktostrong}
Let $X$ be any infinite-dimensional complex Banach space. 

\begin{enumerate}
\item If $\Vert C_\varphi \Vert_{HS}<\infty$, then $C_\varphi$ is bounded 
$wH^p(X)\to H^p(X)$ for any $p$ satisfying $1\le p < \infty$.

\item The norm $\Vert C_\varphi\colon wH^p(X)\to H^p(X)\Vert $ is equivalent 
to $\Vert C_\varphi \Vert_{HS}^{2/p}$ for $2 < p < \infty$.

\item $\Vert C_\varphi\colon wH^2(X)\to H^2(X)\Vert = \Vert C_\varphi \Vert_{HS}$.
\end{enumerate}
\end{theorem}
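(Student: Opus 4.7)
The plan is to split the proof into an upper bound handling all three parts uniformly, followed by separate lower bound arguments for (3) and for (2), with the latter being the main technical obstacle.

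For the upper bound, the starting point is the standard pointwise estimate $|g(z)| \le \Vert g\Vert_{H^p}(1-|z|^2)^{-1/p}$ for $g \in H^p$, $z \in \UnitDisk$. Applying this to $g = x^*\circ f$ and taking the supremum over $\Vert x^*\Vert \le 1$ via Hahn--Banach gives $\Vert f(z)\Vert_X \le \Vert f\Vert_{wH^p(X)}(1-|z|^2)^{-1/p}$ for any $f \in wH^p(X)$. Substituting $z = \varphi(\zeta)$, raising to the $p$-th power and integrating over $\UnitCircle$ produces
\[
\Vert \widetilde{C_\varphi}(f)\Vert_{H^p(X)}^p \le \Vert f\Vert_{wH^p(X)}^p \int_{\UnitCircle}\frac{dm(\zeta)}{1-|\varphi(\zeta)|^2} = \Vert f\Vert_{wH^p(X)}^p \Vert C_\varphi\Vert_{HS}^2,
\]
which establishes (1) together with the upper bound $\Vert \widetilde{C_\varphi}\Vert \le \Vert C_\varphi\Vert_{HS}^{2/p}$ required by (2) and (3).

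For the equality in (3), I would exhibit an extremal function. In $X = \ell^2$ with orthonormal basis $(e_n)$, the function $f(z) = \sum_{n\ge 0}z^n e_n$ has $\Vert f\Vert_{wH^2(\ell^2)} = 1$ (since $\Vert x^*\circ f\Vert_{H^2}^2 = \sum|x^*(e_n)|^2 = \Vert x^*\Vert_{\ell^2}^2$) while $\Vert f\circ\varphi\Vert_{H^2(\ell^2)}^2 = \int(1-|\varphi|^2)^{-1}\,dm = \Vert C_\varphi\Vert_{HS}^2$, saturating the upper bound. For a general infinite-dimensional $X$, Dvoretzky's $\ell^n_2$-theorem supplies, for each $n$ and $\varepsilon>0$, vectors $x_0,\dots,x_{n-1}\in X$ with $(1-\varepsilon)(\sum|b_k|^2)^{1/2} \le \Vert\sum b_kx_k\Vert_X \le (\sum|b_k|^2)^{1/2}$; dualizing yields $(\sum|x^*(x_k)|^2)^{1/2} \le \Vert x^*\Vert$. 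The truncation $f_n(z) = \sum_{k=0}^{n-1}z^kx_k$ then satisfies $\Vert f_n\Vert_{wH^2(X)} \le 1$ and $\Vert f_n\circ\varphi\Vert_{H^2(X)}^2 \ge (1-\varepsilon)^2\sum_{k=0}^{n-1}\Vert\varphi^k\Vert_{H^2}^2$, so passing to the limit in $n$ and $\varepsilon$ delivers the matching lower bound.

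The lower bound in part (2) is the main obstacle. The natural analogue is $f_n(z) = \sum_{k<n}a_kz^kx_k$ with Dvoretzky vectors $(x_k)$ and coefficients $a_k = (k+1)^{1/p-1/2}$ calibrated so that $\sum_k a_k^2r^{2k} \asymp (1-r^2)^{-2/p}$; the lower Dvoretzky estimate then yields $\Vert f_n(\varphi(\zeta))\Vert_X \gtrsim (1-|\varphi(\zeta)|^2)^{-1/p}$ for $n$ large, so that $\Vert f_n\circ\varphi\Vert_{H^p(X)}^p \gtrsim \Vert C_\varphi\Vert_{HS}^2$. The genuine difficulty is the uniform bound on $\Vert f_n\Vert_{wH^p(X)}$: for $p>2$ the $H^p$-norm of a power series is not controlled by the $\ell^2$-norm of its coefficients, and the naive Hausdorff--Young estimate only produces a $(\log n)^{1/p'-1/2}$ factor. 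To avoid this loss I would randomize by $\varepsilon_k a_kz^kx_k$ with Rademacher signs $\varepsilon_k$ and apply Khintchine's inequality fiberwise in $\zeta\in\UnitCircle$, combined with Fubini, to obtain
\[
\mathbb{E}\Vert x^*\circ f_n^\varepsilon\Vert_{H^p}^p \asymp_p \Big(\sum_k a_k^2|x^*(x_k)|^2\Big)^{p/2} \le \Vert x^*\Vert^p,
\]
using $\sup_k a_k = 1$ for $p>2$. A selection argument, exploiting that the relevant functionals $x^*$ act through an $n$-dimensional subspace and combining with concentration estimates on that finite-dimensional space, should then produce a realization with $\Vert f_n^\omega\Vert_{wH^p(X)}$ bounded uniformly in $n$, yielding the desired lower bound $\gtrsim \Vert C_\varphi\Vert_{HS}^{2/p}$.
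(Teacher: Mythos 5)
Your upper bound (the pointwise estimate $\Vert f(z)\Vert_X\le\Vert f\Vert_{wH^p(X)}(1-\vert z\vert^2)^{-1/p}$ obtained from the scalar growth estimate applied to $x^*\circ f$ plus Hahn--Banach) and your treatment of part (3) (the extremal function $\sum_n z^ne_n$ in $\ell^2$, transferred to general $X$ via Dvoretzky's theorem and truncation) are correct and are essentially the argument of \cite{LTY09} as summarized after the theorem. The test functions $f_n(z)=\sum_{k<n}(k+1)^{1/p-1/2}z^kx_k$ for part (2) are also the right ones, and you have correctly located the crux in the uniform bound on $\Vert f_n\Vert_{wH^p(X)}$.

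The gap is in your proposed mechanism for that bound. The randomization cannot help, because the quantity you are trying to control does not depend on the signs: by Hahn--Banach and the lower Dvoretzky estimate, the coefficient vectors $(x^*(x_k))_{k<n}$ with $\Vert x^*\Vert\le1$ fill out the unit ball of $\ell^n_2$ up to the factor $1-\varepsilon$, and replacing $c_k$ by $\varepsilon_kc_k$ merely permutes that ball, so that for \emph{every} choice of signs
\[
\Vert f_n^\varepsilon\Vert_{wH^p(X)}\asymp\sup_{\Vert c\Vert_{\ell^2}\le1}\Bigl\Vert\sum_{k<n}c_k(k+1)^{1/p-1/2}z^k\Bigr\Vert_{H^p}.
\]
Hence no selection argument can produce a better realization: the uniform bound you need is exactly the statement that $\lambda_k=(k+1)^{1/p-1/2}$ is a bounded coefficient multiplier $H^2\to H^p$. (The concentration step is also circular in itself: the Lipschitz constant of $\varepsilon\mapsto\Vert\sum_k\varepsilon_k\lambda_kc_kz^k\Vert_{H^p}$ in the Euclidean metric is again a multiplier norm of this type, and Khintchine plus Markov gives only polynomial tails, which cannot be union-bounded over a net of the $n$-dimensional ball.) The missing ingredient is deterministic, namely Duren's multiplier theorem \cite{JDuren}: for $p\ge2$ any sequence with $\lambda_k=O(k^{1/p-1/2})$ multiplies $H^2$ boundedly into $H^p$. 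With it, $\Vert x^*\circ f_n\Vert_{H^p}\le C_p(\sum_k\vert x^*(x_k)\vert^2)^{1/2}\le C_p\Vert x^*\Vert$ directly, no randomization needed, and your lower bound computation then completes part (2); this is precisely the route indicated in the text. It also explains Question \ref{q:weaktostrong}(a): for $1\le p<2$ the corresponding multiplier is unbounded, so this construction breaks down.
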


Parts (2) and (3) are obtained by explicit  computations for $X=\ell^2$.
The extension to  arbitrary Banach spaces $X$ is based on  Dvoretzky's 
$\ell^n_2$-theorem and coefficient  multiplier results  \cite{JDuren} 
corresponding to bounded operators
\[
\sum_k a_kz^k \mapsto \sum_k \lambda_ka_kz^k, \quad H^2 \to H^p,
\]
where $(\lambda_k)$ is a fixed sequence. By contrast to Theorem \ref{thm:weaktostrong}, 
$\widetilde{C_\varphi}$ is bounded $wBMOA(\ell^2) \to BMOA(\ell^2)$ if and only if 
the scalar-valued operator $C_\varphi$ is bounded $\mathcal B \to BMOA$, see  \cite[Example 4.1]{LTY09}, where  $\mathcal B$ is the Bloch space.

\begin{question}\label{q:weaktostrong}
(a) Does part (2) of Theorem \ref{thm:weaktostrong} extend to $1\le p < 2$? 
The corresponding coefficient multiplier theorems $H^2 \to H^p$ for $1\le p < 2$
are not  readily useful.

(b) Characterize the weakly com\-pact compositions $\widetilde{C_\varphi}$ from $wH^1(X)$ to $H^1(X)$ if  $X$ is reflexive.
 It is  possible to show that if 
\begin{align*}
\lim_{s\to 1}\sup_{0<r<1} \int_{|\varphi(r\zeta)|>s} \frac{1}{1-|\varphi(\zeta)|^2}dm(\zeta)=0, 
\end{align*}
then $\widetilde{C_\varphi}$ is weakly com\-pact from $wH^1(X)$ to $H^1(X)$ (details omitted).
Note that 
$\widetilde{C_\varphi}$ is never com\-pact  $wH^1(X) \to H^1(X)$ 
for infinite-dimensional $X$ by section \ref{frame}.
\end{question}

One may also consider the composition operators $f\colon\mapsto f \circ \varphi$ 
from strong to weak spaces, e.g. as acting $H^p(X) \to wH^p(X)$, but this case does not  
produce new qualitative phenomena. This follows from the factorization
\[
\xymatrix{
H^p(X) \ar[rr]^{\widehat{C_{\varphi}}} \ar[rd]^{\widetilde{C_{\varphi}}}  && wH^p(X)\\
& H^p(X) \ar[ru]^{J}}
\]
where  $\widehat{C_{\varphi}}$ denotes the 
composition operator acting $H^p(X) \to wH^p(X)$ and $J: H^p(X) \to wH^p(X)$ is the continuous inclusion.
Hence, any  $\widehat{C_{\varphi}}$  is bounded $H^p(X) \to wH^p(X)$, 
and for $p = 1$ and reflexive spaces $X$
one obtains that  $\widehat{C_{\varphi}}\colon H^1(X) \to wH^1(X)$ is weakly com\-pact if and only
if $\varphi$ satisfies (\ref{Sh}), that is, $C_\varphi\colon H^1 \to H^1$ is com\-pact. 
(For the "only if"\ -part note that section \ref{frame} applies here.)
 
\section{Operator-weighted composition operators}\label{ow}

In the final section we briefly discuss extensions of  weighted composition operators 
to the vector-valued setting. Let $\psi\colon \UnitDisk\to \mathbb C$ 
and $\varphi\colon \mathbb D \to \mathbb D$ be given analytic maps. 
The weighted composition operator 
\begin{align*}
W_{\psi,\varphi}\colon f \mapsto \psi \cdot (f\circ\varphi)
\end{align*}
defines a linear map $H(\mathbb D) \to H(\mathbb D)$, where
$H(\mathbb D)$ denotes the linear space of analytic functions $\mathbb D \to \mathbb C$. 
Clearly $W_{\psi,\varphi}=M_\psi \circ C_\varphi$, where 
$M_\psi$ is the pointwise multiplier defined by
$M_\psi f=\psi \cdot f$ on $H(\mathbb D)$. Thus 
$W_{1,\varphi} = C_\varphi$ and $W_{\psi,id} = M_\psi$.

Weighted composition operators $W_{\psi,\varphi}$ have been extensively studied  on 
a range of complex-valued analytic function spaces,
and characterizations of e.g.\ boundedness and com\-pactness  
are known for many classical spaces. The case of the weighted spaces 
$H_v^\infty$ was resolved by Contreras and Hern\'andez-D\'iaz  in
 \cite{JCH2000} and Montes-Rodr\'iguez in \cite{MR00}.
For $1 < p <\infty$ and $H^p$  there is a Carleson measure characterization
 in \cite{JCH2001}, and the analogous results for the Bergman space $A^p$
 are found in \cite{JCZ}. The case of $BMOA$ can be found in \cite{La09}.
 We refer to e.g.\ \cite{JGP2010}  and \cite{JGG2008}
for other types of function-theoretic conditions for the boundedness of $W_{\psi,\varphi}$ 
on $H^p$.
Moreover, by  \cite{JCH2001} all weakly com\-pact  
weighted compositions $W_{\psi,\varphi}\colon H^1 \to H^1$ are com\-pact.

Independently Manhas \cite{JManhas08} and the authors \cite{JLaT09} proposed
the following natural analogue of weighted composition operators in the vector-valued
setting. Let $\varphi\colon \UnitDisk \to \UnitDisk$ be an analytic self-map 
and $\psi\colon \mathbb D \to L(X,Y)$ an analytic operator-valued map, where $X$ and $Y$ are complex Banach spaces.
Here $L(X,Y)$ denotes the space of bounded linear operators $X \to Y$.
Define the operator-weighted composition operator $W_{\psi,\varphi}$ by
$f \mapsto \psi(f\circ\varphi)$, that is,
\begin{align*}
(W_{\psi,\varphi}(f))(z) = \psi(z)(f(\varphi(z)), \quad z \in \mathbb D,
\end{align*}
for analytic functions $f\colon \mathbb D \to X$. Note that $z \mapsto  \psi(z)(f(\varphi(z))$
is an analytic map $\mathbb D \to Y$, so that $W_{\psi,\varphi}$
is a linear map $H(\mathbb D,X) \to H(\mathbb D,Y)$. Again 
$W_{\psi,\varphi}= M_\psi \circ C_\varphi$, 
where  $M_\psi$ denotes the operator-valued pointwise multiplier defined by
$(M_\psi f)(z)=\psi(z)(f(z))$ from $H(\mathbb D,X)$ to $H(\mathbb D,X)$.
Thus the operator-weighted composition operators form a 
much larger class compared to its scalar-valued relative.
Operator-weighted compositions appear naturally: 
for a large class of Banach spaces $X$ 
all linear onto isometries $H^\infty(X) \to  H^\infty(X)$ have the form $W_{\psi,\varphi}$, 
where $\psi(z) \equiv U$ is a fixed onto isometry of $X$ and $\varphi$ is an automorphism
of $\mathbb D$,  see  \cite{JLin90} and  \cite{JCJ90}.
The above definition of  $W_{\psi,\varphi}$ is analogous to that of weighted compositions
on spaces $C(K,X)$ of continuous functions, see e.g.\ \cite{JR88},
where $K$ is a com\-pact Hausdorff space.

The present knowledge of operator-weighted compositions 
is fairly rudimentary and  most of the results deal with  weighted $H^\infty_v(X)$ spaces 
and  their locally convex variants.  
Characterizations of  boundedness and (weak) com\-pactness of $W_{\psi, \varphi}$ between  $H_v^\infty(X)$ spaces were obtained in \cite{JLaT09}, 
and results for certain locally convex spaces of analytic vector-valued functions
are found in \cite{JManhas08} and \cite{JBonetEA12}. 
Boundedness of  the operator multipliers $M_\psi$ in related settings has been considered e.g.\ 
 in \cite{JManhas03, JManhas08, JManhas11, JWang11}.
 
We next state the main results from \cite{JLaT09}, which are vector-valued extensions of scalar results from \cite{JCH2000} and  \cite{MR00}. For a bounded continuous weight function 
$v\colon \UnitDisk\to (0,\infty)$, put
\[
\tilde v(z)=(\sup\{|f(z)|\colon \Vert f\Vert_{H_v^\infty}\le 1\})^{-1}.
\]
If  $\psi\colon \mathbb D \to L(X,Y)$ is a given analytic operator-valued map define 
the auxiliary linear map $T_\psi$ by $x\mapsto \psi(\cdot)x$. 
It follows that  $T_\psi$ is bounded
$X\to H_w^\infty(Y)$ whenever $W_{\psi,\varphi}$ is bounded $H_v^\infty(X)\to H_w^\infty(Y)$.

\begin{theorem}\label{thm:wco}
(1) Let $v$ and $w$ be weight functions. Then
\begin{align*}
\Vert W_{\psi,\varphi} \colon H_v^\infty(X)\to H_w^\infty(Y) \Vert = \sup_{z\in\UnitDisk}\frac{w(z)}{\tilde{v}(\varphi(z))}\Vert \psi(z)\Vert_{L(X,Y)}.
\end{align*}

(2) Assume that $v$ and $w$ are radial weight functions. Then $W_{\psi,\varphi}$ is 
com\-pact (respectively, weakly com\-pact)  $H_v^\infty(X)\to H_w^\infty(Y)$ if and only if both
\begin{equation}\label{owc}
\limsup_{|\varphi(z)|\to 1}\frac{w(z)}{\tilde{v}(\varphi(z))}\Vert \psi(z)\Vert_{L(X,Y)}=0,
\end{equation}
and  $T_\psi$ is com\-pact (respectively, weakly com\-pact) $X\to H_w^\infty(Y)$.
\end{theorem}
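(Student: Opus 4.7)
The plan is to prove (1) by a Hahn--Banach reduction to the scalar case, to establish necessity in (2) by combining the factorization $T_\psi=W_{\psi,\varphi}\circ J$ with a peaking-function construction, and to obtain sufficiency by a compact-approximation argument driven by the compactness of $T_\psi$ together with \eqref{owc}.

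For part (1), the upper bound rests on the pointwise estimate $\Vert f(w)\Vert_X\le \tilde v(w)^{-1}\Vert f\Vert_{H_v^\infty(X)}$, which I obtain from the defining property of the associated weight $\tilde v$ together with the observation that $x^*\circ f\in H_v^\infty$ has scalar norm $\le\Vert f\Vert_{H_v^\infty(X)}$ for every $x^*\in X^*$ with $\Vert x^*\Vert\le 1$. Substituting $w=\varphi(z)$ into $\Vert W_{\psi,\varphi}(f)(z)\Vert_Y\le \Vert\psi(z)\Vert\,\Vert f(\varphi(z))\Vert_X$ and taking weighted suprema gives $\le$. For the reverse inequality I fix $z_0$ and $\varepsilon>0$, choose $g\in H_v^\infty$ of norm $\le 1$ with $|g(\varphi(z_0))|\ge (1-\varepsilon)\tilde v(\varphi(z_0))^{-1}$ (available by the definition of $\tilde v$) and a unit vector $x\in X$ nearly norming $\psi(z_0)$; the elementary tensor $g\otimes x\in H_v^\infty(X)$ has norm $\le 1$ and attains the lower bound up to a factor $(1-\varepsilon)^2$.

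For the necessity in (2), the map $J\colon X\to H_v^\infty(X)$, $x\mapsto 1\otimes x$, is an isometric embedding and $T_\psi=W_{\psi,\varphi}\circ J$, so (weak) compactness of $W_{\psi,\varphi}$ passes immediately to $T_\psi$. To derive \eqref{owc} I assume towards a contradiction that $w(z_n)\tilde v(\varphi(z_n))^{-1}\Vert\psi(z_n)\Vert\ge\delta$ for some $z_n$ with $|\varphi(z_n)|\to 1$; after passing to a pseudohyperbolically separated subsequence I invoke the standard peaking-function lemma for radial weights (as used for scalar compactness in \cite{JCH2000, MR00}) to produce $g_n\in H_v^\infty$ with $\Vert g_n\Vert\le C$, $|g_n(\varphi(z_n))|\simeq \tilde v(\varphi(z_n))^{-1}$, and $g_n\to 0$ on compacta. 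Combining with unit vectors $x_n$ nearly norming $\psi(z_n)$ and setting $f_n=g_n\otimes x_n$ yields a bounded sequence in $H_v^\infty(X)$ with $w(z_n)\Vert W_{\psi,\varphi}(f_n)(z_n)\Vert_Y\ge \tfrac14\delta$. In the norm-compact case any convergent subsequence of $(W_{\psi,\varphi}(f_n))$ must vanish pointwise (since $g_n(\varphi(z))\to 0$), contradicting the lower bound. In the weakly compact case I refine the construction so that the $g_n$ are equivalent to the unit vector basis of $c_0$, whence $(W_{\psi,\varphi}(f_n))$ contains a $c_0$-basic sequence and the image cannot be relatively weakly compact.

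For sufficiency I use the identity $W_{\psi,\varphi}(f)(z)=T_\psi(f(\varphi(z)))(z)$. Given $\varepsilon>0$, pick $r\in (0,1)$ such that $w(z)\tilde v(\varphi(z))^{-1}\Vert \psi(z)\Vert<\varepsilon$ on $\{|\varphi(z)|>r\}$, which controls $W_{\psi,\varphi}(f)$ on that region uniformly for $f$ in the unit ball. On the complement $\{|\varphi(z)|\le r\}$ the values $f(\varphi(z))$ remain in the bounded set $M_r B_X$ with $M_r=\sup_{|w|\le r}\tilde v(w)^{-1}$, and $(w,v)$ being radial lets me patch the relative (weak) compactness of $T_\psi(M_r B_X)\subset H_w^\infty(Y)$ together with Cauchy-estimate equicontinuity of $\{f(\varphi(\cdot)) : f\in B\}$ on compacta of $\UnitDisk$ into a finite (weakly) relatively compact $O(\varepsilon)$-net for $W_{\psi,\varphi}(B)$ in $H_w^\infty(Y)$. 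Letting $\varepsilon\to 0$ exhibits $W_{\psi,\varphi}$ as a norm limit of (weakly) compact operators, hence (weakly) compact. The hardest step, in my view, is the $c_0$-type construction needed for necessity in the weakly compact case, since weak convergence of a subsequence of $(W_{\psi,\varphi}(f_n))$ does not yield any pointwise norm information; the patching in the sufficiency argument also requires careful use of the radial structure of $v$ and $w$ to handle $|z|\to 1$ within the good region $\{|\varphi(z)|\le r\}$.
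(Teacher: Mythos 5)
The survey states Theorem~\ref{thm:wco} without proof, citing \cite{JLaT09}, so your attempt has to be measured against the strategy of that source. Part (1) and the \emph{compact} halves of part (2) in your sketch are essentially the standard arguments and are sound: the norm identity via $\Vert f(a)\Vert_X\le\tilde v(a)^{-1}\Vert f\Vert_{H_v^\infty(X)}$ and the tensors $g\otimes x$; the factorization $T_\psi=W_{\psi,\varphi}\circ J$; the peaking-function contradiction for norm compactness; and the Arzel\`a--Ascoli patching for sufficiency. In the latter you should, however, run the equicontinuity argument in the variable $u=\varphi(z)\in\overline{D(0,r)}$, which is compact, rather than on $\{z\colon|\varphi(z)|\le r\}$, which need not be relatively compact in $\UnitDisk$, and use the key estimate $w(z)\Vert\psi(z)(f(u)-g(u))\Vert_Y\le\Vert T_\psi(f(u)-g(u))\Vert_{H_w^\infty(Y)}$ to convert pointwise relative compactness of $T_\psi(M_rB_X)$ into total boundedness of the image of the unit ball.

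Both \emph{weakly compact} directions contain genuine gaps. For the necessity of \eqref{owc}, the inference ``$(W_{\psi,\varphi}(f_n))$ contains a $c_0$-basic sequence, hence the image cannot be relatively weakly compact'' is false as stated: a sequence equivalent to the unit vector basis of $c_0$ is weakly null, so the set $\{W_{\psi,\varphi}(f_n)\}$ \emph{is} relatively weakly compact. What is actually needed is either that $W_{\psi,\varphi}$ fixes the copy of $c_0$ spanned by $(f_n)$ (which requires a lower bound on finite linear combinations, not merely $\Vert W_{\psi,\varphi}(f_n)\Vert\ge c\delta$), or, more economically, the Dunford--Pettis property of $c_0$: once $[f_n]\cong c_0$ (which itself requires the gliding-hump upper estimate $\sup_z v(z)\sum_n|g_n(z)|<\infty$ for the peaking functions -- this is where radiality really enters), a weakly compact restriction of $W_{\psi,\varphi}$ to $[f_n]$ would be completely continuous and would send the weakly null sequence $(f_n)$ to a norm-null one, contradicting the lower bound. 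For the sufficiency in the weakly compact case, there is no meaningful notion of a ``finite weakly relatively compact $\varepsilon$-net,'' and Arzel\`a--Ascoli yields norm compactness, not weak compactness. The correct device is Grothendieck's lemma: it suffices to produce, for each $\varepsilon>0$, a single weakly compact set $K_\varepsilon$ with $W_{\psi,\varphi}(B)\subset K_\varepsilon+\varepsilon B_{H_w^\infty(Y)}$; constructing $K_\varepsilon$ from the weak compactness of $T_\psi$ (for instance via the Davis--Figiel--Johnson--Pe{\l}czy\'nski factorization of $T_\psi$ through a reflexive space, or via Krein--\v{S}mulian applied to the closed absolutely convex hull of $T_\psi(M_rB_X)$) is the substantive step, and your sketch does not supply it.
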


Parts of Theorem \ref{thm:wco} were independently
obtained in  \cite{JManhas08} together with other results.
Clearly the case $X=Y$ and $\psi(z) \equiv I_X$
yields  the boundedness and weak com\-pactness results 
from \cite{LST98, BDL01} for $\widetilde{C_\varphi}$,
since $T_\psi$ is then (essentially) $I_X$. 

Theorem \ref{thm:wco} points to some striking differences between scalar- and 
vector-valued weighted compositions as well as 
between operator-weighted and standard composition operators. 
For instance, the auxiliary operators $T_\psi$
play no role for scalar weighted compositions. Moreover,
 $W_{\psi,\varphi}$ can easily be com\-pact  $H_v^\infty(X)\to H_w^\infty(Y)$ for 
 infinite-dimensional spaces $X$ and $Y$. 
 For instance, let $\Vert \varphi\Vert_\infty < 1$
 and $\psi(z) \equiv U \in K(X,Y)$,  whence (\ref{owc}) holds and $T_\psi$ is
 com\-pact $X\to H_w^\infty(Y)$.
 Operator-weighted compositions  $W_{\psi,\varphi}\colon H_v^\infty(X)\to H_w^\infty(X)$
 do not always factor through $I_X$ as in Proposition \ref{GF}, but (F2) is replaced by
the following factorization for any $z \in \mathbb D$: 
\[
\xymatrix{
H_v^\infty(X) \ar[r]^{W_{\psi,\varphi}}  & H_w^\infty(Y) \ar[d]^{\delta_{z}}\\
X \ar[u]^j \ar[r]^{\psi(z)} & Y}
\]

There are also examples where $W_{\psi,\varphi}$ is weakly com\-pact
$H_v^\infty(X)\to H_w^\infty(X)$ but not com\-pact. For this one may use the fact 
\cite[Thm.\ 4.4]{JLaT09} that 
 if $w$ is a radial weight and $\psi\in H_w^0(L(X,Y)))$, then $T_\psi$ is com\-pact 
 (respectively, weakly com\-pact) if and only if 
 \[
 \psi(\UnitDisk)\subset K(X,Y) \quad \textrm{(respectively}, \psi(\UnitDisk)\subset W(X,Y)).
 \]
  Here $H_w^0(L(X,Y)))$
 denotes the closure of the analytic $L(X,Y)$-valued polynomials
 in $H_w^\infty(L(X,Y)))$, and $W(X,Y)$ the weakly com\-pact operators $X \to Y$.
 There are further differences between $H^\infty_v(X)$ and the  locally convex setting as studied in  \cite{JBonetEA12}.

The following problem stated in \cite{JLaT09} appears quite challenging.

\begin{question}
Characterize boundedness and (weak) com\-pactness of 
operator-weighted compositions  $W_{\psi,\varphi}\colon H^p(X) \to H^p(Y)$ 
as well as $A^p(X) \to A^p(Y)$ for $1\le p <\infty$.
 The argument from \cite{JCH2001} based on 
 Carleson measure techniques does not readily extend to the vector-valued setting. 
\end{question}


\begin{thebibliography}{MMM}

\bibitem{AGL}
R. Aron, P. Galindo and M. Lindstr\"om, \emph{Compact homomorphisms between algebras
of analytic functions}, Studia Math. \textbf{123} (1997), 235--247.

\bibitem{JBlasco87}
O. Blasco,
\emph{Boundary values of vector-valued harmonic functions considered as operators},
Studia Math. \textbf{LXXXVI} (1987), 19--33.

\bibitem{B00}
O. Blasco, 
\emph{Remarks on vector-valued BMOA and vector-valued multipliers}, 
Positivity \textbf{4} (2000), 339--356.

\bibitem{BDL01}
J. Bonet, P. Doma\'nski and M. Lindstr\"om, 
\emph{Weakly com\-pact composition operators on analytic vector-valued function spaces}, 
Ann. Acad. Sci. Fenn. Math. \textbf{26} (2001), 233--248.

\bibitem{BF02}
J. Bonet and M. Friz,
\emph{Weakly com\-pact composition operators on locally convex spaces},
Math. Nachr. \textbf{245} (2002), 26--44.

\bibitem{JBonetEA12}
J. Bonet, M.C. G\'omez-Collado, D. Jornet and E. Wolf, 
\emph{Operator-weighted composition operators between weighted spaces of vector-valued analytic functions}, 
Ann. Acad. Sci. Fenn. Math. \textbf{37} (2012), 319--338.

\bibitem{BCM99}
P.S. Bourdon, J.A. Cima and A.L. Matheson, 
\emph{Compact composition operators
on BMOA}, Trans. Amer.  Math. Soc. \textbf{351} (1999), 2183--2196. 

\bibitem{JCJ90}
M. Cambern and K. Jarosz, \emph{Multipliers and isometries in $H^\infty_E$}, Bull. London Math. Soc. \textbf{22} (1990), 463--466.

\bibitem{JCH2000}
M. D. Contreras and A.G. Hern\'andez-D\'iaz, \emph{Weighted composition operators in
weighted Banach spaces of analytic functions}, J. Austral. Math. Soc. \textbf{69} (2000), 41--60.

\bibitem{JCH2001}
M.D. Contreras and A.G. Hern\'andez-D\'iaz, 
\emph{Weighted composition operators on Hardy spaces}, J. Math. Anal. Appl. \textbf{263} (2001), 224--233. 

\bibitem{CMC95}
C.C. Cowen and B.D. MacCluer, 
\emph{Composition Operators on Spaces of Analytic Functions}, 
CRC Press, Boca Raton 1995.

\bibitem{JCZ}
\u{Z}.~\u{C}u\u{c}kovi\'c and R.~Zhao,
\emph{Weighted composition operators on the Bergman space}, 
J. London Math. Soc. \textbf{70} (2004), 499--511.

\bibitem{Di67}
N. Dinculeanu, \emph{Vector Measures}, Pergamon Press, 1967.

\bibitem{JDuren}
P.L. Duren, \emph{On the multipliers of $H^p$ spaces}, Proc. Amer. Math. Soc. \textbf{22} (1969), 24--27.

\bibitem{freniche:fatou}  F.J. Freniche, J.C. Garc\'\i a-V\'azquez and
L. Rodr\'\i guez-Piazza, \emph{The failure of Fatou's theorem on Poisson integrals of
Pettis integrable functions}, J. Funct. Anal. \textbf{160} (1998), 28-41.

\bibitem{freniche:survey} F.J. Freniche, J.C. Garc\'\i a-V\'azquez and
L. Rodr\'\i guez-Piazza, 
\emph{Operators into Hardy spaces and analytic Pettis
integrable functions}, in: Recent Progress in Functional Analysis,
North-Holland Mathematical Studies vol. 189 (2001), North-Holland, pp. 349--362.

\bibitem{JGP2010}
E.A. Gallardo-Guti\'errez and J.R. Partington, 
\emph{The role of BMOA in the boundedness of weighted composition operators},  
J. Funct. Anal. \textbf{258} (2010), 3593--3603. 

\bibitem{G81}
J. Garnett, \emph{Bounded Analytic Functions}, Academic Press, 1981.

\bibitem{JGG2008}
G. Gunatillake,
\emph{Compact weighted composition operators on the Hardy space},  
Proc. Amer. Math. Soc. \textbf{136} (2008), 2895--2899. 

\bibitem{HilleP}
E. Hille and R.S. Phillips, 
\emph{Functional analysis and semi-groups}, Amer. Math. Soc. Colloq. Publ. 31, Amer. Math. Soc., Providence, 1975.

\bibitem{HJ99}
W.E. Hornor and J.E. Jamison, 
\emph{Isometrically equivalent composition operators 
on spaces of analytic vector-valued functions}, Glasgow Math. J.
\textbf{41} (1999), 441--451.

\bibitem{JR88}
J.E. Jamison and M. Rajagopalan, \emph{Weighted composition operators on $C(K,E)$},
J. Operator Theory \textbf{19} (1988), 307--317.

\bibitem{Kal74}
N.J. Kalton, \emph{Spaces of com\-pact operators}, Math. Ann. \textbf{208} (1974), 267--278.

\bibitem{La05}
J. Laitila, \emph{Weakly com\-pact composition operators on vector-valued
BMOA}, J. Math. Anal. Appl. \textbf{308} (2005), 730--745.

\bibitem{La07}
J. Laitila, \emph{Composition operators and vector-valued BMOA}, 
Integral Equ. Operator Theory \textbf{58} (2007), 487--502.

\bibitem{La09}
J. Laitila, \emph{Weighted composition operators on BMOA}, 
Comput. Methods Funct. Theory \textbf{9} (2009), 27--46.

\bibitem{LNST13}
J. Laitila, P. Nieminen, E. Saksman and H.-O. Tylli, 
\emph{Compact and weakly com\-pact 
composition operators on BMOA}, Complex Anal. Oper. Theory \textbf{7} (2013), 163--181.

\bibitem{LaT06}
J. Laitila and H.-O. Tylli, 
\emph{Composition operators on vector-valued harmonic functions and Cauchy transforms}, Indiana Univ. Math. J. \textbf{55} (2006), 719--746.

\bibitem{JLaT09}
J. Laitila and H.-O. Tylli, 
\emph{Operator-weighted composition operators on vector-valued analytic function spaces}, Illinois J. Math. \textbf{53} (2009), 1019--1032.

\bibitem{LTY09}
J. Laitila, H.-O. Tylli and M. Wang, 
\emph{Composition operators from weak to strong spaces of vector-valued analytic functions}, 
J. Operator Theory \textbf{62} (2009), 281--295.

\bibitem{LLQR13}
P. Lefevre, D. Li, H. Queffelec and L. Rodriguez-Piazza, 
\emph{Some new properties of composition operators associated with the lens map}, Israel J. Math. \textbf{195} (2013), 801--824.

\bibitem{JLin90}
P.-K. Lin, 
\emph{The isometries of $H^\infty(E)$}, Pacific J. Math. \textbf{143} (1990), 69--77.

\bibitem{LT77}
J. Lindenstrauss and L. Tzafriri, \emph{Classical Banach spaces I}, Springer-Verlag, 1977.

\bibitem{JLS99}
M. Lindstr\"om and G. Schl\"uchtermann, \emph{Composition of operator ideals}, Math.
Scand. \textbf{84} (1999), 284--296.

\bibitem{LST98}
P. Liu, E. Saksman and H.-O. Tylli, 
\emph{Small composition operators on 
analytic vector-valued function spaces}, 
Pacific J. Math. \textbf{184} (1998), 295--309.

\bibitem{JManhas03}
J. S. Manhas, 
\emph{Multiplication operators on weighted locally convex spaces of
vector-valued analytic functions}, Southeast Asian Bull. Math. \textbf{27} (2003), 649--660.

\bibitem{JManhas08}
J.S. Manhas, 
\emph{Weighted composition operators on weighted spaces of 
vector-valued analytic functions}, J. Korean Math. Soc. \textbf{45} (2008), 1203--1220.

\bibitem{JManhas11}
J.S. Manhas,  
\emph{Weighted composition operators between weighted spaces of 
vector-valued holomorphic functions on Banach spaces},  
Appl. Math. Comput. \textbf{218} (2011), 929--934.

\bibitem{MR00}
A. Montes-Rodr\'iguez, 
\emph{Weighted composition operators on
weighted Banach spaces of analytic functions}, J. London Math. Soc. \textbf{61} (2000), 872--884.

\bibitem{JMujica}
J. Mujica, 
\emph{Linearization of bounded holomorphic mappings on Banach spaces},
Trans.  Amer. Math. Soc. \textbf{324} (1991), 295--309.

\bibitem{Mu}
P.F.X. M\"uller, \emph{Isomorphisms between $H^1$-spaces},
Monografie Matematyczne vol. 66, Birkh\"auser, 2005.

\bibitem{JNg}
K. Ng, 
\emph{On a theorem of Dixmier}, Math. Scand. \textbf{29} (1971), 279--280.

\bibitem{Pi}
A. Pietsch, \emph{Operator Ideals}, North-Holland, 1980.

\bibitem{ST06}
E. Saksman and H.-O. Tylli, 
\emph{Multiplications and elementary operators in the Banach 
space setting}, 
in: Methods in Banach space theory,
London Math. Soc. Lecture Note Ser. vol. 337 (2006), Cambridge University Press, 
Cambridge, pp. 253-292.

\bibitem{S92}
D. Sarason, 
\emph{Weak com\-pactness of holomorphic  composition operators
on $H^1$}, 
in: Functional Analysis and Operator Theory, Lecture Notes in Mathematics 1511
(1992), Springer-Verlag, pp. 75--79.

\bibitem{Sh87}
J.H. Shapiro, \emph{The essential norm of a composition operator}, 
Ann. Math. \textbf{125} (1987), 375--404.

\bibitem{Sh93}
J.H. Shapiro, \emph{Composition Operators and Classical Function Theory},
Springer-Verlag, New York, 1993.

\bibitem{JST73}
J.H. Shapiro and P.D. Taylor, 
\emph{Compact, nuclear and Hilbert-Schmidt composition operators on $H^2$}, 
Indiana Univ. Math. J. \textbf{23} (1973/74), 471--496.

\bibitem{SB99}
S.D. Sharma and U. Bhanu, 
\emph{Composition operators on vector-valued
Hardy spaces}, 
Extracta Math. \textbf{14} (1999), 31--39.

\bibitem{Sm99}
W. Smith, 
\emph{Compactness of composition operators on BMOA},
Proc. Amer. Math. Soc. \textbf{127} (1999), 2715--2725.

\bibitem{St86}
C. Stanton, 
\emph{Counting functions and majorization theorems for Jensen measures},
Pacific J. Math. \textbf{125} (1986), 459--468.

\bibitem{Tj96}
M. Tjani, 
\emph{Compact composition operators on some M\"obius invariant
Banach spaces}, Ph.D. thesis, Michigan State University, 1996.

\bibitem{U11}
S.-I. Ueki, 
\emph{Composition operators on the Fock space of vector-valued
analytic functions}, Ars Combin. \textbf{100} (2011), 161--167. 

\bibitem{Wa05} 
M. Wang, 
\emph{Composition operators on analytic vector-valued Nevanlinna classes},
Acta Math. Sci. \textbf{25} (2005), 771--780.

\bibitem{JWang07}
M. Wang, 
\emph{Composition operators between vector-valued weighted Bergman spaces}, 
Far East J. Math. Sci. \textbf{26} (2007), 419--431.

\bibitem{JWang08}
M. Wang, 
\emph{Composition operators between different Dirichlet spaces}, 
Complex Var. Elliptic Equ. \textbf{53} (2008), 1093--1102.

\bibitem{JWang11}
M. Wang, 
\emph{Weighted composition operators between Dirichlet spaces}, 
Acta Math. Sci. Ser. B Engl. Ed. \textbf{31} (2011), 641--651.

\end{thebibliography}
\end{document}